\documentclass[a4paper,autoref]{lipics-v2021}\nolinenumbers

\makeatother
\usepackage{amsmath,amssymb,amsfonts,latexsym}
\usepackage{color,graphicx}
\usepackage{url} 
\usepackage{fancybox}
\usepackage{algorithm}
\usepackage[noend]{algpseudocode}
\usepackage{mathtools}
\usepackage{enumerate,xspace}
\usepackage{thm-restate}
\usepackage{siunitx} 
\usepackage{tikz}
\usetikzlibrary{arrows.meta,calc,decorations.pathreplacing}
\hideLIPIcs

\newcommand{\eps}{\varepsilon}
\renewcommand{\epsilon}{\eps}

\theoremstyle{plain}

\newenvironment{myquote}%
  {\list{}{\leftmargin=4mm\rightmargin=4mm}\item[]}%
  {\endlist}

\renewcommand{\leq}{\leqslant}
\renewcommand{\geq}{\geqslant}

\newcommand{\opt}{\mbox{{\sc opt}}\xspace}

\renewcommand{\subjclass}[1]{\par\smallskip\noindent\textbf{Mathematics Subject Classification (2020).} #1\par\smallskip}

\definecolor{exactknavy}{RGB}{20,45,105}
\definecolor{navybar}{RGB}{24,52,93}
 \definecolor{tiletop}{RGB}{196,211,230}
 \definecolor{tilebottom}{RGB}{207,225,214}
\definecolor{tileleft}{RGB}{238,224,194}
 \definecolor{tileright}{RGB}{222,211,233}
\counterwithin{remark}{section}
\counterwithin{lemma}{section}
\counterwithin{theorem}{section}
\counterwithin{proposition}{section}
\counterwithin{claim}{section}
\counterwithin{observation}{section}
\counterwithin{corollary}{section}

\title{Exact-Distance Domination in Grid Graphs}

\author{Sandip Das}{Advanced Computing and Microelectronics Unit, Indian Statistical Institute, Kolkata, India}{sandipdas@isical.ac.in}{}{}
\author{Sweta Das}{Advanced Computing and Microelectronics Unit, Indian Statistical Institute, Kolkata, India}{swetamath98@gmail.com}{}{}
\author{Arpan Sadhukhan}{Department of Mathematics, Indian Institute of Technology, Dharwad, India}{ra.a.sadhukhan@iitdh.ac.in}{}{}

\authorrunning{S.~Das, S.~Das, A.~Sadhukhan} 
 \Copyright{Sandip Das, Sweta Das, Arpan Sadhukhan}

\keywords{exact-distance domination, hop domination, square grid,
domination density, periodic construction, lattice}

\makeatletter
\renewcommand{\subjclassHeading}{%
  \textcolor{lipicsGray}{\fontsize{9}{12}\sffamily\bfseries
    Mathematics Subject Classification (2020)\enskip}%
}

\gdef\@subjclass{ 05C12, 05C69}        
\gdef\@ccsdescString{\@subjclass}     

\providecommand{\ccsdesc}[2][]{}
\makeatother

\begin{document}
\setcounter{page}{0}
\maketitle

\begin{abstract}
Let $G_n$ be the $n\times n$ square grid, and let
$k\geq 2$. A set $D\subseteq V(G_n)$ is an
\emph{exact-distance $k$-dominating set} if every vertex
$v\in V(G_n)\setminus D$ has a vertex $u\in D$ with
$d(u,v)=k$. We write $D_{\mathrm{opt}}^{(k)}(G_n)$ for the minimum
cardinality of such a set. For every fixed $k$, consider the limit
$
\delta_k=
\lim_{n\to\infty}
\frac{D_{\mathrm{opt}}^{(k)}(G_n)}{n^2}.
$ We prove that, for every fixed \(k\geq 3\), $
\frac{1}{4k}
\leq
\delta_k
\leq
\frac{k-1}{3k^2-k-1}.
$
For $k=2$, the exact value $\delta_2=1/9$ follows directly.

\end{abstract}


\section{Introduction}

A \emph{dominating set} of a graph is a set of vertices such that every vertex
outside the set has a neighbor in it. Domination and its many variants form
a substantial area of graph theory; standard terminology and classical
results can be found in the monograph of Haynes, Hedetniemi, and
Slater~\cite{HaynesHedetniemiSlater1998}. Grid graphs are a particularly
natural setting for domination problems. Their regular local geometry makes
periodic constructions possible, while boundary effects and unavoidable
interactions between local coverage regions make sharp estimates difficult.
For ordinary domination, the domination number of all sufficiently large
rectangular grids was determined by Goncalves, Pinlou, Rao, and
Thomasse~\cite{GoncalvesPinlouRaoThomasse2011}.

The usual distance-\(k\) domination problem requires every vertex outside the
selected set to lie at a distance at most \(k\) from a selected vertex. Distance
domination and related broadcast parameters on grids have been studied using
geometric, algebraic, and algorithmic constructions; for example, see,
\cite{BlessingInskoJohnsonMauretour2015, FarinaGrez2016,FataSmithSundaram2013}.
In those settings, a selected vertex covers an entire metric ball. The problem considered here is different: the prescribed distance must be attained exactly.

Let $G_n$ be the $n \times n$ grid with $V(G_n)=\{1,2,\ldots,n\}^2$,
where the distance between two vertices is calculated as follows
\[
d\bigl((i,j),(i',j')\bigr)
=
|i-i'|+|j-j'|.
\]
For an integer \(k\geq 2\), a set \(D\subseteq V(G_n)\) is called an
\emph{exact-distance \(k\)-dominating set} if, for every
\(v\in V(G_n)\setminus D\), there exists \(u\in D\) such that $d(u,v)=k$.
We denote the minimum cardinality of such a set by $D_{\mathrm{opt}}^{(k)}(G_n)$.

When \(k=2\), this is the standard notion of hop domination: every vertex
outside the selected set has a selected vertex at a distance exactly
two~\cite{henning20172, NatarajanAyyaswamy2015}. We use the term
\emph{exact-distance \(k\)-domination} for arbitrary \(k\), since it states
the metric requirement is explicit and avoids confusion with ordinary
distance domination.

Our definition should also be distinguished from exact \(k\)-step
domination. In the latter notion, every vertex is required to have a unique
selected vertex at the prescribed distance; equivalently, the relevant
exact-distance neighborhoods form a partition of the vertex
set~\cite{ChartrandHararyHossainSchultz1995,Hersh1999}. In the present paper,
the distance requirement is imposed only on vertices outside \(D\), and
multiple coverage is allowed. For closely related papers we refer the readers to~\cite{anusha2024further, ayyaswamy2015bounds, ayyaswamy2018note,  packiavathihop, jalalvand2017complexity, fujita2025tight, henning2020algorithm, henning20172, Hersh1999,   karthika2025hop, karthika2025polynomial, shanmugavelan2021hop}.

The goal of the paper is to derive asymptotic upper and lower bounds for exact-distance $k$-domination in grids. In view of this, we define the \emph{exact-distance $k$-domination density} as \[
\delta_k
=
\lim_{n\to\infty}
\frac{D_{\mathrm{opt}}^{(k)}(G_n)}{n^2}.
\]

It is easy to show that the above limit exists (\ref{prop: limit exists}). 

\subparagraph*{Our contribution.} In Section~\ref{sec:lower bound}, we show that for every fixed \(k\geq3\), 
$
D_{\mathrm{opt}}^{(k)}(G_n)\geq \frac{n^2}{4k}-O_k(n)$.
 The proof exploits the geometry of the grid that forces many points to be dominated multiple times by any exact-distance $k$-dominating set. Combining these observations and using a careful charging argument, we derive the lower bound above. \\
Next, in Section~\ref{sec:upper bound}, by a periodic vertical-bar construction, for $k\geq 2$, we show that
\[
D_{\mathrm{opt}}^{(k)}(G_n)\leq \frac{k-1}{3k^2-k-1}n^2+O_k(n).
\]
The normalized quantity \(D_{\mathrm{opt}}^{(k)}(G_n)/n^2\) therefore converges to a limit \(\delta_k\) satisfying for all $k\geq 3$,
\[
\frac1{4k}\leq\delta_k\leq\frac{k-1}{3k^2-k-1}.
\]
For \(k=2\), we get asymptotically tight bound and prove that \(\delta_2=1/9\). \\ \\

\section{Preliminaries}

For a vertex $u\in V(G_n)$, define the exact $k$-sphere around $u$ by
\[
S_k(u)=\{x\in V(G_n):d(u,x)=k\}.
\]
Note that a $k$-sphere is not the whole ball of radius $k$. It is only the boundary layer at distance exactly $k$. So a selected vertex $u$ covers $u$ itself and the vertices exactly $k$ steps away, but it does not cover the vertices at distances $1,2,\ldots, k-1$. These interior vertices are what we informally call “holes” or "gaps" that the $k$-sphere leaves behind.
We also define the closed exact $k$-coverage set by $
T_k(u)=\{u\}\cup S_k(u)$.

\begin{figure}
\begin{center}
\includegraphics[width=0.8\columnwidth]{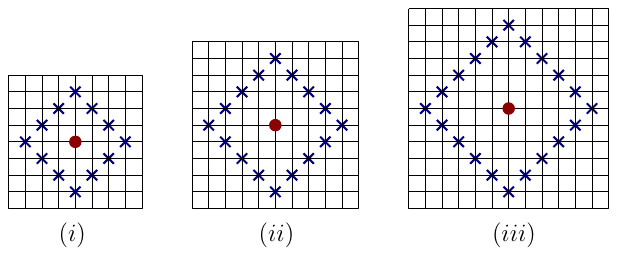}
\end{center}
\caption{The pictures (i), (ii), (iii) corresponds to exact $k$-spheres (set of blue points) around a point (red) for $k=3,4,5$ respectively.}
\label{fig:standardshape}
\end{figure}

The word ``closed'' means that we include the selected vertex $u$ itself. Let $D\subseteq V(G_n)$ be an exact $k$-dominating set. For every vertex $x\in V(G_n)$, define its \emph{charge} by
\[
\mathrm{ch}(x)=|\{u\in D:x\in T_k(u)\}|.
\]
This charge is exactly the coverage multiplicity of $x$: it counts how many selected vertices cover $x$.

With slight abuse of notation, in the rest of the paper, we sometimes denote an exact-distance $k$-dominating set by simply $k$-dominating set. Throughout the paper, \(O_k(n)\) denotes a quantity whose absolute value is
at most \(c_k n\), where the constant \(c_k\) may depend on \(k\), but not
on \(n\).

It is easy to observe that away from the boundary of the grid, the exact $k$-sphere has size $|S_k(u)|=4k$, because the integer solutions of $|a|+|b|=k$ form the boundary of a diamond and there are $4k$ such points. Hence $|T_k(u)|=4k+1$, for an interior vertex $u$. For points closer to boundary it is clear that the $k$-sphere will have even less cardinality. Thus we get a trivial lower bound on $D^{(k)}_{\opt}(G_n)$ which we note below.

\begin{observation}\label{obs:trivial lower bound}
    $D^{(k)}_{\opt}(G_n)\geq \frac{n^2}{4k+1}$
\end{observation}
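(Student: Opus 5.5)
The plan is a direct double-counting (covering) argument. Fix any exact-distance $k$-dominating set $D\subseteq V(G_n)$ and count the pairs $(u,x)$ with $u\in D$ and $x\in T_k(u)$.

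First, every vertex of $G_n$ lies in at least one set $T_k(u)$ with $u\in D$. If $x\in D$, then $x\in T_k(x)$ because $x$ itself belongs to its closed coverage set. If $x\notin D$, the definition of an exact-distance $k$-dominating set supplies some $u\in D$ with $d(u,x)=k$, so $x\in S_k(u)\subseteq T_k(u)$. In the charge notation this says $\mathrm{ch}(x)\geq 1$ for every $x\in V(G_n)$.

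Second, we bound the size of each coverage set. For $u=(i,j)$, the map $(a,b)\mapsto(i+a,j+b)$ sends the integer solutions of $|a|+|b|=k$ onto $S_k(u)$ whenever the image point lies in $\{1,\dots,n\}^2$. The full diamond has exactly $4k$ points: for each $a$ with $|a|<k$ there are two choices of $b$, and for $a=\pm k$ there is one, giving $2(2k-1)+2=4k$. Restricting to points inside the grid can only remove points, so $|S_k(u)|\leq 4k$ and $|T_k(u)|\leq 4k+1$ for every $u$, boundary vertices included.

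Finally, we sum the charges in two ways:
\[
n^2\leq\sum_{x\in V(G_n)}\mathrm{ch}(x)=\sum_{u\in D}|T_k(u)|\leq (4k+1)|D|.
\]
Since this holds for every exact-distance $k$-dominating set, it holds for a minimum one, which gives $D^{(k)}_{\opt}(G_n)\geq n^2/(4k+1)$.

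No step here is a real obstacle. The only point needing a little care is that the bound $|T_k(u)|\leq 4k+1$ must hold uniformly, not just for interior vertices, and this follows from the injectivity of the translation map.
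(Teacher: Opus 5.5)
Your proof is correct and matches the paper's argument: every vertex has charge at least $1$, each $T_k(u)$ has at most $4k+1$ vertices (exactly $4k+1$ in the interior, fewer near the boundary), and so $n^2\le\sum_{x}\mathrm{ch}(x)\le(4k+1)|D|$. This is the paper's inequality (1) with $E\ge 0$. Your explicit count of the $4k$ diamond points and the uniform bound at boundary vertices just spell out what the paper states in passing.
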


Observe that $D^{(k)}_{\opt}(G_n)$ is trivially upper bounded by the size of the grid which is $n^2$. Next, we show in the following proposition below that the limit $\delta_k$ defined in the introduction always exists.

\begin{proposition}\label{prop: limit exists}

For every fixed \(k\geq2\), the limit $\delta_k$
exists. Moreover,
\[
\delta_k
=
\inf_{m\geq1}
\frac{D_{\mathrm{opt}}^{(k)}(G_m)}{m^2}.
\]

\end{proposition}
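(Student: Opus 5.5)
The plan is the standard tiling argument for a subadditive-type sequence. Write $a_m=D_{\mathrm{opt}}^{(k)}(G_m)$ and $\delta^\ast=\inf_{m\geq 1} a_m/m^2$, which lies in $[0,1]$ because $0\leq a_m\leq m^2$. Since trivially $\liminf_{n\to\infty} a_n/n^2\geq \delta^\ast$, it suffices to show that for every fixed $m$,
\[
\limsup_{n\to\infty}\frac{a_n}{n^2}\leq \frac{a_m}{m^2}.
\]
Taking the infimum over $m$ then gives $\limsup_n a_n/n^2\leq\delta^\ast\leq\liminf_n a_n/n^2$. This shows at once that the limit $\delta_k$ exists and that it equals $\delta^\ast$.

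\textbf{Tiling step.} Fix $m$ and let $D_m$ be an optimal exact-distance $k$-dominating set of $G_m$. For $n\geq m$ write $n=qm+r$ with $0\leq r<m$. Partition $\{1,\ldots,qm\}^2$ into $q^2$ translated blocks $B_{s,t}=\{(s m+i,\,t m+j):1\leq i,j\leq m\}$, and place a translated copy of $D_m$ in each block. Let $R$ be the set of remaining vertices, namely those with some coordinate greater than $qm$. Set
\[
D=\bigcup_{s,t}\bigl(D_m+(sm,tm)\bigr)\ \cup\ R .
\]

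\textbf{Verification.} The key observation is that distances are preserved when a block is embedded in $G_n$. Distances in $G_n$ are given by the $\ell_1$ formula $|i-i'|+|j-j'|$, which is translation invariant and coincides with the distance in $G_m$ for two points in the same block. Now take a vertex $v\notin D$. It is not in $R$, so it lies in some block $B_{s,t}$. Its translate back to $G_m$ is not in $D_m$, so there is $u\in D_m$ at $G_m$-distance exactly $k$ from it. The translate of $u$ lies in $D$ and is at $G_n$-distance exactly $k$ from $v$. Vertices of $R$ lie in $D$ and therefore need no domination.

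\textbf{Counting.} This gives
\[
a_n\leq |D|\leq q^2a_m+|R|\leq \frac{n^2}{m^2}\,a_m+2rn\leq \frac{n^2}{m^2}\,a_m+2mn.
\]
Dividing by $n^2$ and letting $n\to\infty$ with $m$ fixed yields the required $\limsup$ bound.

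I do not expect a real obstacle here. The only point needing care is the distance-preservation fact in the verification step, which holds because the grid metric is the restriction of the $\ell_1$ metric. That property is what makes the block solutions combine without interfering with one another. Handling the leftover strip crudely by putting all of it into $D$ costs only $O(mn)=o(n^2)$, which is harmless.
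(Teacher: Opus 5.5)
Your proposal is correct and follows essentially the same argument as the paper. You tile the $qm\times qm$ subgrid with translated optimal $m\times m$ solutions, select the leftover strip to get $a_n\le q^2a_m+2mn$, and sandwich $\limsup_n a_n/n^2$ and $\liminf_n a_n/n^2$ around $\inf_m a_m/m^2$. Your explicit check that translation preserves the $\ell_1$ distance is a detail the paper leaves implicit.
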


\begin{proof}
Consider, $ \gamma_k(n)=D_{\mathrm{opt}}^{(k)}(G_n)$. Fix \(m\geq 1\). Tiling a $ \left\lfloor\frac{n}{m}\right\rfloor m \times \left\lfloor\frac{n}{m}\right\rfloor m $ subgrid of \(G_n\) by disjoint \(m\times m\) grids, placing an optimal exact-distance \(k\)-dominating set in each block, and selecting all remaining vertices give  $\gamma_k(n) \leq \left\lfloor\frac{n}{m}\right\rfloor^2\gamma_k(m)+2mn$. 

Dividing by \(n^2\) and letting \(n\to\infty\), we obtain \[ \limsup_{n\to\infty}\frac{\gamma_k(n)}{n^2} \leq \frac{\gamma_k(m)}{m^2}. \] Since this holds for every \(m\), $\limsup_{n\to\infty}\frac{\gamma_k(n)}{n^2} \leq \inf_{m\geq1}\frac{\gamma_k(m)}{m^2}$.  The reverse inequality for the limit inferior follows immediately from the definition of the infimum. Hence the limit exists and equals the stated infimum. \end{proof}
\section{Lower bound for exact-distance \(k\)-domination on grid}\label{sec:lower bound}

 In this section, we show that the trivial lower bound~\ref{obs:trivial lower bound} does not match the asymptotic optimal solution for all $k\geq 3$. In other words, there is no clean way to get a tiling of the grid using the shapes as in Figure~\ref{fig:standardshape} that forces only negligible ($O(n)$) amount of overlap only. The grid geometry somewhat prevents this clean tiling. We establish the result through a thorough charging argument. This is the core contribution of the paper.

Let \(D\subseteq V(G_n)\) be a \(k\)-dominating set. For every vertex
\(x\in V(G_n)\), recall that the charge of \(x\) by
\(\mathrm{ch}(x)=|\{u\in D:x\in T_k(u)\}|\). Since \(D\) is an \(k\)-dominating set, every vertex has charge at least \(1\). Indeed, if
\(x\in D\), then \(x\in T_k(x)\), and if \(x\notin D\), then some \(u\in D\)
satisfies \(d(u,x)=k\), so \(x\in T_k(u)\).

Recall that the total excess charge by $E=\sum_{x\in V(G_n)}(\mathrm{ch}(x)-1)$.

A vertex of charge \(1\) contributes \(0\) to \(E\), a vertex of charge \(2\)
contributes \(1\), and so on. Since \(\sum_{x\in V(G_n)}\mathrm{ch}(x)=n^2+E\),
and every selected vertex contributes at most \(4k+1\) incidences to this sum,
we have
\[
 n^2+E\le (4k+1)|D|. \tag{1}
\]
The trivial lower bound uses only \(E\ge 0\). We improve it by proving that
\(E\) is large.

Let \(X=\{x\in V(G_n):\mathrm{ch}(x)\ge 2\}\) be the set of overcovered
vertices. For \(u\in D\), define the credit of \(u\) by
\(h(u)=|T_k(u)\cap X|\). Thus \(h(u)\) counts how many vertices in the coverage
set of \(u\) are overcovered. Finally, let \(H=\sum_{u\in D}h(u)\).

\begin{observation}\label{obs:H-sum}
\(H=\sum_{x\in X}\mathrm{ch}(x)\).
\end{observation}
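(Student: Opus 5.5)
This is a double-counting identity. I will count the set of incidence pairs
\[
P=\{(u,x)\in D\times X : x\in T_k(u)\}
\]
in two ways: once by grouping over the selected vertices $u\in D$, and once by grouping over the overcovered vertices $x\in X$.

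\emph{Grouping by $u$.} For a fixed $u\in D$, the pairs in $P$ with first coordinate $u$ correspond exactly to the vertices of $T_k(u)\cap X$. There are $h(u)$ of them by definition. Summing over $u\in D$ gives $|P|=\sum_{u\in D}h(u)=H$.

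\emph{Grouping by $x$.} For a fixed $x\in X$, the pairs in $P$ with second coordinate $x$ correspond exactly to the selected vertices $u\in D$ with $x\in T_k(u)$. By the definition of the charge there are $\mathrm{ch}(x)$ of them. Summing over $x\in X$ gives $|P|=\sum_{x\in X}\mathrm{ch}(x)$.

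Equating the two counts yields $H=\sum_{x\in X}\mathrm{ch}(x)$. Both sums are finite because $D$ and $X$ are subsets of the finite vertex set $V(G_n)$, so exchanging the order of summation is justified.

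I do not expect any real obstacle here. The only point needing care is to use the same incidence relation, namely $x\in T_k(u)$ with the \emph{closed} coverage set, in both the definition of $h$ and the definition of $\mathrm{ch}$. This guarantees that the case $x=u$ is counted consistently on both sides: it appears in $h(u)$ exactly when $u\in X$, and in $\mathrm{ch}(u)$ through the term $u\in T_k(u)$.
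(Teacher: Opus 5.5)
Your proposal is correct and is the same double count the paper uses: the incidences $(u,x)$ with $u\in D$, $x\in X$ and $x\in T_k(u)$ are counted once grouped by selected vertex (giving $H$) and once grouped by overcovered vertex (giving $\sum_{x\in X}\mathrm{ch}(x)$). Writing out the incidence set $P$ explicitly is simply a more formal version of the paper's one-line argument.
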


\begin{proof}
A vertex \(x\in X\) is counted once in \(h(u)\) for each selected vertex
\(u\in D\) satisfying \(x\in T_k(u)\). The number of such selected vertices is
exactly \(\mathrm{ch}(x)\). Summing over all \(x\in X\) gives the identity.
\end{proof}

\begin{observation}\label{obs:E-H}
\(E\ge H/2\).
\end{observation}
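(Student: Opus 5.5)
We prove the inequality vertex by vertex, then sum. The only fact needed is that for every integer \(c\geq 2\) we have \(c-1\geq c/2\), since this is equivalent to \(c\geq 2\).

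First, restrict the excess to overcovered vertices. Every vertex has charge at least \(1\), so each term \(\mathrm{ch}(x)-1\) in
\[
E=\sum_{x\in V(G_n)}\bigl(\mathrm{ch}(x)-1\bigr)
\]
is nonnegative. The vertices outside \(X\) have charge exactly \(1\) and contribute \(0\). Hence
\[
E=\sum_{x\in X}\bigl(\mathrm{ch}(x)-1\bigr).
\]

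Next, apply the elementary inequality. For \(x\in X\) we have \(\mathrm{ch}(x)\geq 2\), so \(\mathrm{ch}(x)-1\geq \mathrm{ch}(x)/2\). Summing over \(X\) gives
\[
E\geq \frac12\sum_{x\in X}\mathrm{ch}(x).
\]

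Finally, by Observation~\ref{obs:H-sum}, the right-hand side equals \(H/2\), which proves \(E\geq H/2\).

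There is no real obstacle here. The only point needing care is the first step: discarding the vertices outside \(X\) is legitimate only because charge-\(1\) vertices contribute exactly zero to \(E\), and that in turn relies on \(D\) being a \(k\)-dominating set, which guarantees every vertex has charge at least \(1\).
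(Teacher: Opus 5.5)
Your proof is correct and is essentially the paper's argument: both apply the pointwise bound \(\mathrm{ch}(x)\le 2(\mathrm{ch}(x)-1)\) for \(x\in X\) and then Observation~\ref{obs:H-sum}. The only cosmetic difference is that you write \(E\) as an exact sum over \(X\), using that non-overcovered vertices have charge exactly \(1\). The paper instead uses \(\sum_{x\in X}(\mathrm{ch}(x)-1)\le E\), which holds because the omitted terms are nonnegative.
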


\begin{proof}
For \(x\in X\), we have \(\mathrm{ch}(x)\ge 2\), and hence
\(\mathrm{ch}(x)\le 2(\mathrm{ch}(x)-1)\). By Observation~\ref{obs:H-sum},
\[
H=\sum_{x\in X}\mathrm{ch}(x)
\le 2\sum_{x\in X}(\mathrm{ch}(x)-1)
\le 2E.
\]
Thus \(E\ge H/2\).
\end{proof}

Next we show when the intersection of two exact \(k\)-spheres is non-empty.

\begin{lemma}\label{lem:sphere-intersection}
Let \(p,q\) be two vertices of the infinite square grid. Suppose \(d(p,q)=d\),
where \(d\) is even and \(2\le d\le 2k-2\). Then
\(|S_k(p)\cap S_k(q)|\ge 2\).
\end{lemma}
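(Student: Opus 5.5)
We will prove the lemma by writing down two common points explicitly, after reducing to a normalized position. By translation we take $p=(0,0)$. Reflections $(a,b)\mapsto(\pm a,\pm b)$ and the swap $(a,b)\mapsto(b,a)$ are isometries of the infinite grid that fix $p$, so we may also assume $q=(a,b)$ with $a\geq b\geq 0$ and $a+b=d$. Here $d$ is even, so $d=2m$ with $1\leq m\leq k-1$. The plan is then to exhibit two distinct points $z_1,z_2$ with $|z_i|_1=k$ and $|z_i-q|_1=k$.

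The natural candidates lie on the perpendicular bisector region, symmetric with respect to the segment from $p$ to $q$. We look for points whose coordinates sum to $m$, that is, halfway between $p$ and $q$ in the direction $(1,1)$, and which are then pushed outward along the direction $(1,-1)$.

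Concretely, write $a=m+t$ and $b=m-t$ with $0\leq t\leq m$. We try
\[
z_1=(m+t+s,\,-s'),\qquad z_2=(-s'',\,m-t+s''')
\]
and solve for the shifts. A cleaner choice comes from pushing the midpoint-type point $c=(m,0)$, which lies on a geodesic from $p$ to $q$ since $a\geq m$ and $b\geq 0$, in the two directions $(1,-1)$ and $(-1,1)$ scaled by $r=k-m\geq 1$. Set
\[
z_1=(m+r,\,-r)=(k,\,m-k),\qquad z_2=(m-r,\,r)=(2m-k,\,k-m).
\]
For $z_1$, we have $d(p,z_1)=k+(k-m)=2k-m$, which is wrong. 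So instead we shift $c$ perpendicularly while keeping equal distances. The better approach is to use the points
\[
z_1=(a-k+m,\,k-m)\quad\text{and}\quad z_2=(k-m,\,\ldots)
\]
but it is simplest to handle two cases, $b\geq k-m$ and $b<k-m$.

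\textbf{Case $b<k-m$.} Take $z_{\pm}=(x,\pm y)$-type points. With $x=m+?$, we want points equidistant from $(0,0)$ and $(a,b)$ with distance $k$. Choose
\[
z_1=\bigl(m+\lceil\cdot\rceil,\ \cdot\bigr).
\]
I expect the main obstacle to be this explicit case analysis. To keep it manageable, I would use a simpler universal pair:
\[
z_1=(a-k+m,\;b+k-m)\cdot\text{?}
\]
Checking this pair: $d(p,z_1)$ equals $|a-(k-m)|+b+(k-m)$. When $a\geq k-m$ this gives $a+b=2m\neq k$ in general, so the pair fails.

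The fallback I will actually follow is a counting and parity argument. Let $f(z)=|z|_1-|z-q|_1$. As $z$ moves along the sphere $S_k(p)$, which is a closed cycle of $4k$ lattice points with consecutive points at distance $1$ apart in the $\ell_1$ sense, $f$ changes by $0$ or $\pm2$ at each step. At $z=(-k,0)$ the distance $|z-q|_1=k+a+b>k$, so $f<0$ there. At the point of $S_k(p)$ along the ray toward $q$, namely a point $z^\ast$ with $|z^\ast-q|_1=k-d$ (using $d\leq k$), or $|z^\ast-q|_1=d-k$ if $d>k$, we have $f>0$ because $d\leq 2k-2<2k$.

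By parity, $|z|_1-|z-q|_1\equiv d\equiv0\pmod 2$. Each of the two arcs of the cycle from $(-k,0)$ to $z^\ast$ therefore moves $f$ from a negative even value to a positive even value in steps of $0$ or $\pm2$, so each arc must hit $f=0$. The two open arcs are disjoint, so this yields two distinct points of $S_k(p)\cap S_k(q)$.

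The care needed is twofold. First, we must check that consecutive sphere points $z,z'$ (with $\ell_\infty$ distance $1$ and $\ell_1$ distance $2$) give $|f(z)-f(z')|\leq 2$ with $f$ even; this holds since each of $|z|_1$ and $|z-q|_1$ changes by at most $2$, and $|z|_1$ is constant on the sphere. Second, we must make sure that $z^\ast$ and $(-k,0)$ are not themselves zeros, which holds because $f(z^\ast)>0$ and $f(-k,0)<0$.
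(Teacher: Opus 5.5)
Your final argument, the parity and discrete intermediate-value argument, is correct, but it is a different route from the paper's. The paper normalizes as you do and simply exhibits two common points,
$x_1=\bigl(\tfrac{a-b}{2},\,k-\tfrac{a-b}{2}\bigr)$ and $x_2=\bigl(\tfrac{a+b}{2},\,\tfrac{a+b}{2}-k\bigr)$.
It checks the four distances directly using $\tfrac{a+b}{2}\le k-1$, and it notes $x_1\neq x_2$ because their second coordinates have opposite signs.

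In your notation, $x_2=(m,m-k)$ is your midpoint $c=(m,0)$ pushed straight down by $k-m$. Your diagonal push is what produced the wrong value $2k-m$. Similarly, $x_1=(t,k-t)$ is the other geodesic midpoint $(t,m-t)$ pushed straight up.

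Your fallback avoids guessing coordinates. The function $f(z)=k-d(z,q)$ on the $4k$-cycle $S_k(p)$ has three properties:
\begin{itemize}
\item it is even, by bipartiteness and $d$ even;
\item it changes by at most $d(z,z')=2$ between consecutive points;
\item it goes from $f(-k,0)=-d<0$ to $f(z^\ast)=\min\{d,2k-d\}>0$.
\end{itemize}
So each of the two disjoint arcs between $(-k,0)$ and $z^\ast$ contains a zero.

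The paper's version is shorter and gives explicit locations, in the same spirit as the explicit overcovered vertices listed in the later extra-credit lemmas. Yours makes it transparent where each hypothesis is used. The condition $d\ge 2$ gives the negative endpoint, $d\le 2k-2$ gives the positive one, and evenness ensures a step of size $2$ cannot jump over $0$. It also needs no coordinate computations at all.

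In a write-up, delete the abandoned explicit attempts. Also delete the early false statement that consecutive sphere points are at $\ell_1$-distance $1$: they differ by $(\pm1,\pm1)$, as you correctly say later.
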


\begin{proof}
By translating, reflecting, and interchanging the two coordinates, we may
assume \(p=(0,0)\) and \(q=(a,b)\), where \(a\ge b\ge 0\) and \(a+b=d\). Since
\(d\) is even, \(a\) and \(b\) have the same parity. Hence \((a-b)/2\) is an
integer.

Define two lattice points
\[
 x_1=\left(\frac{a-b}{2},\ k-\frac{a-b}{2}\right),\qquad
 x_2=\left(\frac{a+b}{2},\ \frac{a+b}{2}-k\right).
\]
We claim that both points are at distance exactly \(k\) from both \(p\) and
\(q\). First, \(d(x_1,p)=k\). Also, since \(a+b\le 2k-2\), we have
\(k-(a+b)/2\ge 1\), and therefore
\[
 d(x_1,q)=\frac{a+b}{2}+\left(k-\frac{a+b}{2}\right)=k.
\]
Similarly, \(d(x_2,p)=k\). Moreover,
\[
 d(x_2,q)=\left|\frac{a+b}{2}-a\right|+
 \left|\frac{a+b}{2}-k-b\right|=\frac{a-b}{2}+
 \left(k-\frac{a-b}{2}\right)=k.
\]
Finally, \(x_1\ne x_2\), because the second coordinate of \(x_1\) is positive,
whereas the second coordinate of \(x_2\) is at most \(-1\). Hence the two exact
\(k\)-spheres have at least two common vertices.
\end{proof}

We now separate the odd and even cases. This is necessary because of parity.
When \(k\) is odd, a vertex at distance \(1\) from \(u\) can force a second
selected vertex whose center is at distance \(k-1\) or \(k+1\) from \(u\). When
\(k\) is even, a vertex at distance \(1\) has the wrong parity, so we use holes
at distance \(2\) instead.

\subsubsection*{Odd values of \(k\)}

Assume throughout this part that \(k\ge 3\) is odd. Define
\[
 I_o=\{(i,j)\in V(G_n):k+2\le i,j\le n-k-1\}.
\]
Thus \(|I_o|=(n-2k-2)^2\), provided \(n\ge 2k+3\). The reason for this choice
is that every vertex within distance at most \(k+1\) from a vertex of \(I_o\)
still lies inside the grid.

\begin{lemma}\label{lem:odd-holes-selected}
Let \(u\in D\cap I_o\). If \(h(u)\le 1\), then all four ordinary grid-neighbors
of \(u\) belong to \(D\).
\end{lemma}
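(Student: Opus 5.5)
We argue by contradiction. Suppose some ordinary neighbor $y$ of $u$ (say $y=u+(1,0)$, after a symmetry of the grid) is not in $D$. Since $d(u,y)=1<k$, the vertex $u$ does not cover $y$. Because $D$ is a $k$-dominating set, there must be some $w\in D$ with $d(w,y)=k$. By the triangle inequality, $k-1\le d(u,w)\le k+1$. Parity in the bipartite grid rules out $d(u,w)=k$, because $d(w,y)=k$ and $d(u,y)=1$ force $d(u,w)\equiv k+1 \pmod 2$. Since $k$ is odd, $d(u,w)\in\{k-1,k+1\}$, which is even. Also $w\neq u$, and since $u\in I_o$, the vertex $w$ lies inside $G_n$ and every vertex we construct below stays inside $G_n$.

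\textbf{Case $d(u,w)=k-1$.} Here $2\le k-1\le 2k-2$ because $k\ge3$, and $k-1$ is even. Lemma~\ref{lem:sphere-intersection} then gives at least two vertices in $S_k(u)\cap S_k(w)$. Each such vertex is covered by both $u$ and $w$, so it has charge at least $2$ and lies in $T_k(u)\cap X$. This gives $h(u)\ge 2$. We must still check that these two vertices lie in the finite grid. In the lemma's construction they are at distance $k$ from $u$, and $u\in I_o$ keeps all vertices within distance $k+1$ of $u$ inside $G_n$.

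\textbf{Case $d(u,w)=k+1$.} This case lies outside the range of Lemma~\ref{lem:sphere-intersection}, and I expect it to be the main obstacle. Write $w-u=(a,b)$ with $|a|+|b|=k+1$, and use the fact that $y$ lies on a geodesic from $u$ to $w$, so $a\ge1$. I would try to exhibit two common points of $S_k(u)$ and $S_k(w)$ directly. The midpoint region of the diamond intersection gives candidates: any $z$ with $d(u,z)=k$ and $d(z,w)=k$ works. For example, $z=u+(c,e)$ with $|c|+|e|=k$ and $|a-c|+|b-e|=k$.

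For $b\ge0$, one option is to take $z_1=u+(a,\,k-a)$ when $a\le k$, suitably adjusted. A cleaner option is to solve along the two "sides" of the diamonds: put $z$ on the line through $u$ perpendicular to the direction $(1,1)$-type diagonal. Concretely, I would use $z_1=u+\bigl(\tfrac{a-b+1}{2}+\ldots\bigr)$, but rather than fix formulas in advance, I would reduce by symmetry to $a\ge b\ge0$ and check two explicit points, analogous to $x_1,x_2$ in the proof of Lemma~\ref{lem:sphere-intersection}. For instance, $z_1=(\lceil (a-b)/2\rceil,\,k-\lceil(a-b)/2\rceil)$-type points shifted appropriately. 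The point is that when $|a|+|b|=k+1\le 2k$ and the sum is even, the two $\ell_1$-spheres of radius $k$ meet in segments, so at least two lattice points exist. I would verify the distances case by case ($b=0$ versus $b\ge1$). In the degenerate case $b=0$, $a=k+1$, the points $u+((k+1)/2,\pm(k-1)/2)$ work, which uses that $k$ is odd.

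In either case $h(u)\ge2$, contradicting $h(u)\le1$. Hence every ordinary neighbor of $u$ is in $D$.
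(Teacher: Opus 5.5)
Your setup is the same as the paper's. You argue by contradiction, take a witness $w\in D$ with $d(w,y)=k$, and use the triangle inequality plus bipartite parity to get $d(u,w)\in\{k-1,k+1\}$. You then produce two common points of $S_k(u)$ and $S_k(w)$; these lie inside $G_n$ because $u\in I_o$ and are overcovered, so $h(u)\ge 2$. Your treatment of the case $d(u,w)=k-1$ is correct.

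The gap is in the case $d(u,w)=k+1$, and it rests on a false premise: this case is \emph{not} outside the range of Lemma~\ref{lem:sphere-intersection}. That lemma requires $d$ even with $2\le d\le 2k-2$. Here $k+1$ is even because $k$ is odd, and $k+1\le 2k-2$ holds precisely when $k\ge 3$, which is assumed; for $k=3$ you get $d=4=2k-2$, the extreme allowed value. So the lemma applies verbatim to both values, which is how the paper finishes in one line.

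As written, your direct argument for this case is not a proof. In the subcase $b=0$, your points $u+\bigl((k+1)/2,\pm(k-1)/2\bigr)$ are correct; they are exactly the points $x_1,x_2$ of the lemma. For every other configuration you only gesture at points ``shifted appropriately'' without fixing or checking them.

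The justification you sketch is also wrong as stated. Two radius-$k$ spheres whose centers are at even distance at most $2k$ need not share two lattice points: $u=(0,0)$ and $w=(2k,0)$ share only $(k,0)$. The bound $d\le 2k-2$ in the lemma is exactly what guarantees $k-d/2\ge 1$, hence $x_1\ne x_2$. Replace your second case by a direct appeal to Lemma~\ref{lem:sphere-intersection}, keeping your remark that the common points lie in $S_k(u)$ and hence inside $G_n$, and the proof is complete.
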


\begin{proof}
Let \(w\) be an ordinary grid-neighbor of \(u\). Suppose, for contradiction,
that \(w\notin D\). Since \(D\) is a \(k\)-dominating set, there exists
\(z\in D\) such that \(d(z,w)=k\). Since \(d(u,w)=1\), the triangle inequality
gives \(k-1\le d(u,z)\le k+1\). Also, \(d(u,w)=1\) and \(d(w,z)=k\) are both odd,
so \(u\) and \(z\) have the same parity. Hence \(d(u,z)\) is even. Since \(k\)
is odd, this forces \(d(u,z)\in\{k-1,k+1\}\).

Both \(k-1\) and \(k+1\) lie between \(2\) and \(2k-2\). By Lemma~\ref{lem:sphere-intersection},
\(S_k(u)\cap S_k(z)\) contains at least two vertices. These common vertices lie
in \(S_k(u)\), and hence, inside \(G_n\), because \(u\in I_o\). They are covered
by both \(u\) and \(z\), so they are overcovered vertices inside \(T_k(u)\). This
implies \(h(u)\ge 2\), contradicting \(h(u)\le 1\). Therefore \(w\in D\).
\end{proof}

\begin{lemma}\label{lem:odd-extra-credit}
Let \(u\in D\cap I_o\) satisfy \(h(u)\le 1\). If \(v\) is one of the four
ordinary grid-neighbors of \(u\), then \(h(v)\ge 4\).
\end{lemma}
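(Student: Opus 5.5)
The plan is to combine Lemma~\ref{lem:odd-holes-selected} with the explicit intersection points behind Lemma~\ref{lem:sphere-intersection}. Since \(u\in D\cap I_o\) and \(h(u)\le 1\), Lemma~\ref{lem:odd-holes-selected} tells us that all four ordinary grid-neighbors of \(u\) lie in \(D\). By translating and applying a symmetry of the grid (reflections and swapping the coordinates), we may assume \(u=(0,0)\) and \(v=(1,0)\). Then the three other neighbors \(w_1=(-1,0)\), \(w_2=(0,1)\) and \(w_3=(0,-1)\) are all selected. Each \(w_i\) is at distance exactly \(2\) from \(v\). Since \(k\ge 3\), this distance is even and lies in \([2,2k-2]\), which is the regime where Lemma~\ref{lem:sphere-intersection} applies. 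So each \(w_i\) shares at least two sphere points with \(v\), and these points are covered by both \(v\) and \(w_i\). That already suggests \(h(v)\) is large. The only issue is making sure the common points for different \(w_i\) are not all the same.

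To handle distinctness, I will write the common points down explicitly rather than using the lemma as a black box. For \(w_1=(-1,0)\), the equations \(|x-1|+|y|=k\) and \(|x+1|+|y|=k\) force \(x=0\) and \(|y|=k-1\). This gives the points \((0,k-1)\) and \((0,-(k-1))\). For \(w_2=(0,1)\), a direct check shows that \((k,1)\) and \((1,k)\) lie on both \(S_k(v)\) and \(S_k(w_2)\). Symmetrically, for \(w_3=(0,-1)\) the points \((k,-1)\) and \((1,-k)\) lie on both \(S_k(v)\) and \(S_k(w_3)\).

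For \(k\ge 3\), these six points are pairwise distinct; this can be seen by comparing coordinates, since \(k-1\ge 2\) and \(k\neq 1\). Each of them lies within distance \(k+1\) of \(u\). Because \(u\in I_o\), all six points therefore lie inside \(G_n\). Each point lies in \(T_k(v)\) and also in \(T_k(w_i)\) for some \(w_i\in D\) with \(w_i\ne v\). Hence each has charge at least \(2\), so it belongs to \(X\).

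It follows that \(|T_k(v)\cap X|\ge 6\ge 4\), that is, \(h(v)\ge 4\). The main point requiring care is the distinctness of the overcovered points, together with the fact that they lie in the grid. Both are settled by the explicit coordinates above and by the definition of \(I_o\). Nothing beyond a short coordinate check is needed.
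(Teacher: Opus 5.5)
Your proof is correct and takes essentially the paper's route: invoke Lemma~\ref{lem:odd-holes-selected}, normalize to \(u=(0,0)\), \(v=(1,0)\), exhibit explicit common points of \(S_k(v)\) and the spheres of the other selected neighbors of \(u\), and check that these points are distinct and lie within distance \(k+1\) of \(u\). The only difference is that the paper uses just \((0,\pm1)\) and obtains the four points \((1,\pm k)\), \((0,\pm(k-1))\), whereas you also use \((-1,0)\) and get six points; this is harmless.
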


\begin{proof}
By Lemma~\ref{lem:odd-holes-selected}, all four ordinary grid-neighbors of
\(u\) lie in \(D\). By translation and symmetry, assume locally that
\(u=(0,0)\) and \(v=(1,0)\). Then \((0,1)\) and \((0,-1)\) also lie in \(D\).

The exact \(k\)-spheres around \((1,0)\) and \((0,1)\) have the two common
vertices \((1,k)\) and \((0,1-k)\). The exact \(k\)-spheres around \((1,0)\) and
\((0,-1)\) have the two common vertices \((1,-k)\) and \((0,k-1)\). These four
vertices are distinct, lie in \(T_k(v)\), and are overcovered. They lie inside
\(G_n\) because each is within distance at most \(k+1\) from \(u\), and
\(u\in I_o\). Hence, \(h(v)\ge 4\).
\end{proof}

\begin{lemma}\label{lem:odd-H}
If \(k\ge 3\) is odd, then \(H\ge 2|D\cap I_o|\).
\end{lemma}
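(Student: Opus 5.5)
We prove the statement with a discharging argument on the selected vertices in \(I_o\). Call \(u\in D\cap I_o\) \emph{poor} if \(h(u)\le 1\) and \emph{rich} otherwise. Every rich vertex already has \(h(u)\ge 2\). Each poor vertex has deficit at most \(2\), since \(h(u)\ge 0\). We cover this deficit using the surplus of its four neighbours. By Lemma~\ref{lem:odd-holes-selected}, all four grid-neighbours of a poor \(u\) lie in \(D\). By Lemma~\ref{lem:odd-extra-credit}, each such neighbour \(v\) has \(h(v)\ge 4\), so \(v\) has surplus \(h(v)-2\ge 2\).

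\textbf{Discharging rule.} Each poor vertex \(u\) receives \(1/2\) unit of credit from each of its four neighbours, for \(2\) units in total. After this transfer, every poor vertex has at least \(h(u)+2\ge 2\).

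\textbf{Donors keep at least \(2\).} A neighbour \(v\) of a poor vertex has \(h(v)\ge 4\). We must check that \(v\) does not give away more than \(h(v)-2\).

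Two poor vertices cannot be adjacent. If \(u\) and \(u'\) were both poor and adjacent, Lemma~\ref{lem:odd-extra-credit} applied to \(u\) would give \(h(u')\ge 4\), a contradiction. Hence every donor \(v\) is itself rich and is never a recipient.

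The vertex \(v\) has at most \(4\) poor neighbours, so it gives away at most \(2\) units. Since \(h(v)-2\ge 2\), the donor retains at least \(2\).

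\textbf{Boundary bookkeeping.} A donor \(v\) adjacent to a poor \(u\in I_o\) may itself lie just outside \(I_o\). Its credit still counts in \(H\), because \(H=\sum_{u\in D}h(u)\) sums over all of \(D\). The inequality we want only requires credits of at least \(2\) on \(D\cap I_o\). The argument therefore redistributes credit within \(D\) and finishes with
\[
H=\sum_{u\in D}h(u)\ \ge\ \sum_{u\in D}h'(u)\ \ge\ \sum_{u\in D\cap I_o}h'(u)\ \ge\ 2|D\cap I_o|,
\]
where \(h'\) is the credit after discharging. This uses \(h'\ge 0\) everywhere, which holds because only rich donors with \(h\ge 4\) give, and each gives at most \(2\).

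\textbf{Main obstacle.} The delicate point is the donor accounting. We need (i) the non-adjacency of poor vertices, so that no vertex both gives and receives, and (ii) the bound that a donor gives at most \(2\) while retaining at least \(2\). Both follow from Lemma~\ref{lem:odd-extra-credit}.

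We also rely on Lemma~\ref{lem:odd-extra-credit} applying whenever \(u\in I_o\), even if \(v\) lies slightly outside \(I_o\). The lemma's proof only uses that \(u\in I_o\), so this is fine.
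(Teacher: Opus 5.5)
Your proposal is correct and is essentially the paper's argument: each vertex of \(D\cap I_o\) with \(h\le 1\) draws \(1/2\) from each of its four selected neighbours (Lemma~\ref{lem:odd-holes-selected}), and each donor has \(h\ge 4\) by Lemma~\ref{lem:odd-extra-credit} and serves at most four recipients, so it keeps at least \(2\). Your remarks that donors are never recipients and that donors outside \(I_o\) still count in \(H\) spell out points the paper treats only briefly.
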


\begin{proof}
We use credit redistribution. Think of each selected vertex \(v\in D\) as
carrying initial credit \(h(v)\). We redistribute credit so that every vertex of
\(D\cap I_o\) finishes with at least \(2\) units of credit.

If \(u\in D\cap I_o\) has \(h(u)\ge 2\), then it already has enough credit. If
\(h(u)\le 1\), then by Lemma~\ref{lem:odd-holes-selected}, all four ordinary
neighbors of \(u\) lie in \(D\). The deficit of \(u\) is at most \(2\), so \(u\)
takes this deficit equally from its four selected neighbors. Thus, it asks each
neighbor for at most \(1/2\) units of credit.

A fixed selected vertex can be an ordinary grid-neighbor of at most four
vertices \(u\), so it is asked to give away at most \(4\cdot(1/2)=2\) units of
credit. Moreover, every vertex that is asked to give credit has \(h\ge 4\), by
Lemma~\ref{lem:odd-extra-credit}. Thus, it can give away at most \(2\) units and
still keep at least \(2\) for itself, if it is also in \(D\cap I_o\). If it is
outside \(D\cap I_o\), it has no interior quota to satisfy. Therefore, the
redistribution is valid, and \(H\ge 2|D\cap I_o|\).
\end{proof}

\begin{theorem}\label{thm:odd-k-lower}
Let \(k\ge 3\) be odd. If \(n\ge 2k+3\), then 
\[D^{(k)}_{\mathrm{opt}}(G_n)\ge \frac{(n-2k-2)^2}{4k}\].

In particular, for fixed odd \(k\ge 3\),
\(D^{(k)}_{\mathrm{opt}}(G_n)\ge n^2/(4k)-O_k(n)\).
\end{theorem}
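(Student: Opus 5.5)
The plan is to combine the counting inequality (1) with the chain of estimates already established: Observation~\ref{obs:E-H} gives $E\ge H/2$, and Lemma~\ref{lem:odd-H} gives $H\ge 2|D\cap I_o|$, so $E\ge |D\cap I_o|$. Substituting into (1), with $D$ an optimal $k$-dominating set, yields
\[
n^2+|D\cap I_o|\le n^2+E\le (4k+1)|D|.
\]
The immediate difficulty is that $|D\cap I_o|$ can be smaller than $|D|$, since selected vertices near the boundary carry no guaranteed excess. So I would not apply (1) over the whole grid. Instead, I would localize the count to the interior region $I_o$.

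To localize, consider only the vertices $x\in I_o$ and count the incidences $\sum_{x\in I_o}\mathrm{ch}(x)$. Each vertex of $I_o$ has charge at least $1$, so this sum is at least $|I_o|=(n-2k-2)^2$. I would then bound it above by splitting $D$ into $D\cap I_o$ and $D\setminus I_o$. A selected $u\in D\cap I_o$ contributes at most $4k+1-h(u)$ ``singly covered'' incidences. The cleanest route is the global identity
\[
\sum_x \mathrm{ch}(x)=\sum_{u\in D}|T_k(u)|\le (4k+1)|D|,
\]
together with the observation that the overcovered vertices lying in $T_k(u)$ for $u\in I_o$ lie inside the grid.

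Concretely, the sharper bound I aim for is $|\{x:\mathrm{ch}(x)\ge1\}|\le \sum_u |T_k(u)|-E$. Combining this with $E\ge |D\cap I_o|$ gives
\[
n^2\le (4k+1)|D|-|D\cap I_o|\le 4k|D|+|D\setminus I_o|.
\]
The remaining task is to handle $|D\setminus I_o|$. Vertices outside $I_o$ number only $n^2-(n-2k-2)^2$, but bounding $|D\setminus I_o|$ by this quantity loses too much. I would avoid that loss by replacing $n^2$ on the left with only the vertices of $I_o$: the vertices of $I_o$ require coverage, and each $u\in D\setminus I_o$ covers at most $4k$ vertices besides itself. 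So I would redo the count with the left side being $|I_o|$, and discount each $u\notin I_o$ by counting only the incidences of its sphere with $I_o$, dropping $u$ itself since $u\notin I_o$. This gives
\[
|I_o|+E'\le 4k|D\setminus I_o|+(4k+1)|D\cap I_o|,
\]
where $E'$ is the excess restricted to $I_o$.

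The main obstacle is checking that $E'$ still satisfies $E'\ge |D\cap I_o|$. The overcovered vertices credited in Lemmas~\ref{lem:odd-holes-selected} and~\ref{lem:odd-extra-credit} lie within distance $k+1$ of interior centers, but they may fall outside $I_o$, so the proof of Lemma~\ref{lem:odd-H} does not transfer automatically. If this fails, I would fall back to the weaker but sufficient asymptotic form $D^{(k)}_{\mathrm{opt}}(G_n)\ge n^2/(4k)-O_k(n)$. That form follows by defining $E$ over the whole grid and bounding the boundary-layer error $|D\setminus I_o|\le 4(k+2)n$. Once $E'\ge |D\cap I_o|$ is available, the count gives $|I_o|\le 4k|D|$, which is exactly the stated bound $(n-2k-2)^2/(4k)$.
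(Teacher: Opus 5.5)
Your derivation through
\[
n^2\le (4k+1)|D|-|D\cap I_o| = 4k|D|+|D\setminus I_o|
\]
is correct and is essentially the paper's argument. You then abandon it at the last step, claiming that bounding $|D\setminus I_o|$ by the number of vertices outside $I_o$ ``loses too much.'' That bound loses nothing. Since $D\setminus I_o\subseteq V(G_n)\setminus I_o$, we have $|D\setminus I_o|\le n^2-|I_o|$. Substituting gives $n^2\le 4k|D|+n^2-|I_o|$, that is, $|I_o|\le 4k|D|$, which is exactly the claimed bound $|D|\ge (n-2k-2)^2/(4k)$. The $n^2$ on the left absorbs the boundary frame entirely. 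This is how the paper finishes: it writes $E\ge |D\cap I_o|\ge |D|-(n^2-|I_o|)$ and plugs this into (1).

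Because of this misjudgment, the proposal as written does not prove the displayed inequality. Your main route needs $E'\ge |D\cap I_o|$ for the excess restricted to $I_o$, and you rightly flag this as unverified. The overcovered vertices produced in Lemmas~\ref{lem:odd-holes-selected} and~\ref{lem:odd-extra-credit} lie within distance $k+1$ of a vertex of $I_o$, so they can fall in the frame $V(G_n)\setminus I_o$. Hence Lemma~\ref{lem:odd-H} says nothing about $E'$. Your fallback uses the cruder estimate $|D\setminus I_o|\le 4(k+2)n$. It only yields $|D|\ge (n^2-4(k+2)n)/(4k)$, which gives the ``in particular'' clause but not the exact bound. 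The localization detour is unnecessary: use the exact count $|D\setminus I_o|\le n^2-|I_o|$ of the frame, and the proof closes in one line.
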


\begin{proof}
By Lemma~\ref{lem:odd-H} and Observation~\ref{obs:E-H}, we have
\(E\ge |D\cap I_o|\). Since \(|I_o|=(n-2k-2)^2\),
\(|D\cap I_o|\ge |D|-(n^2-|I_o|)\). Hence
\(E\ge |D|-n^2+|I_o|\). Substituting this into (1), we obtain
\[
 n^2+|D|-n^2+|I_o|\le (4k+1)|D|.
\]
Thus, \(|I_o|\le 4k|D|\), and therefore,
\(|D|\ge |I_o|/(4k)=(n-2k-2)^2/(4k)\). Since \(D\) was arbitrary, the theorem
follows.
\end{proof}

\subsubsection*{Even values of \(k\)}

Now assume \(k\ge 4\) is even. Define
\[
 I_e=\{(i,j)\in V(G_n):k+3\le i,j\le n-k-2\}.
\]
Thus \(|I_e|=(n-2k-4)^2\), provided \(n\ge 2k+5\). Here we use a slightly
larger buffer because the credit-forcing step will use vertices that may be at a
distance \(k+2\) from the central vertex.

\begin{lemma}\label{lem:even-holes-selected}
Let \(u\in D\cap I_e\). If \(h(u)\le 1\), then every vertex at distance \(2\)
from \(u\) belongs to \(D\).
\end{lemma}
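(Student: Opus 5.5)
Let \(w\) be a vertex at distance \(2\) from \(u\), and suppose for contradiction that \(w\notin D\). I will mirror the proof of Lemma~\ref{lem:odd-holes-selected}, replacing the grid-neighbor by a vertex at distance \(2\).

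Since \(D\) is a \(k\)-dominating set, there is some \(z\in D\) with \(d(z,w)=k\). The triangle inequality gives \(k-2\le d(u,z)\le k+2\). On the square grid \(d(u,z)\equiv d(u,w)+d(w,z)=2+k \pmod 2\). Because \(k\) is even, \(d(u,z)\) is even, so
\[
d(u,z)\in\{k-2,\;k,\;k+2\}.
\]
Note that \(z\ne u\), since \(d(u,w)=2\ne k\). Hence in every case \(d(u,z)\ge k-2\ge 2\), using \(k\ge 4\).

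The case \(d(u,z)\in\{k-2,k\}\) is handled directly. Here \(2\le d(u,z)\le k\le 2k-2\) and \(d(u,z)\) is even, so Lemma~\ref{lem:sphere-intersection} gives two vertices of \(S_k(u)\cap S_k(z)\). These vertices are at distance \(k\) from \(u\in I_e\), so they lie inside \(G_n\). They are covered by both \(u\) and \(z\), and hence are overcovered vertices of \(T_k(u)\). This gives \(h(u)\ge 2\), a contradiction.

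The case \(d(u,z)=k+2\) is the main obstacle. When \(k=4\) we have \(k+2=6=2k-2\), so the lemma still applies. For general even \(k\), however, \(k+2\le 2k-2\) is equivalent to \(k\ge 4\), which always holds. So Lemma~\ref{lem:sphere-intersection} covers this case too: \(d(u,z)=k+2\) is even and lies in \([2,2k-2]\), and we obtain two common sphere vertices again.

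The remaining thing to check is that these common vertices lie inside \(G_n\). They belong to \(S_k(u)\), so they are within distance \(k\) of \(u\). Since \(u\in I_e\), every vertex within distance \(k+2\) of \(u\) stays in the grid. The same buffer also guarantees that \(w\) and the relevant part of the picture around \(z\) are genuine grid vertices; \(z\) itself is a given vertex of \(D\) anyway. Thus in all cases \(h(u)\ge 2\), contradicting \(h(u)\le 1\), and therefore \(w\in D\).
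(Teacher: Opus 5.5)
Your proof is correct and is essentially the paper's argument: parity and the triangle inequality force $d(u,z)\in\{k-2,k,k+2\}$, all of which lie in $[2,2k-2]$ because $k\ge 4$, so Lemma~\ref{lem:sphere-intersection} yields two common sphere vertices that lie in $G_n$ (since $u\in I_e$) and give $h(u)\ge 2$. The paper handles all three values in one sentence; your separate treatment of $d(u,z)=k+2$ as an ``obstacle'' is unnecessary but harmless.
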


\begin{proof}
Let \(w\) be a vertex with \(d(u,w)=2\). Suppose, for contradiction, that
\(w\notin D\). Since \(D\) is a \(k\)-dominating set, there exists
\(z\in D\) such that \(d(z,w)=k\). The triangle inequality gives
\(k-2\le d(u,z)\le k+2\). Since \(d(u,w)=2\) and \(d(w,z)=k\) are both even,
\(u\) and \(z\) have the same parity, so \(d(u,z)\) is even. Therefore
\(d(u,z)\in\{k-2,k,k+2\}\).

Because \(k\ge 4\), all three values \(k-2,k,k+2\) lie between \(2\) and
\(2k-2\). By Lemma~\ref{lem:sphere-intersection}, the exact \(k\)-spheres
around \(u\) and \(z\) have at least two common vertices. These vertices lie in
\(S_k(u)\), and hence inside the grid, because \(u\in I_e\). Therefore
\(T_k(u)\) contains at least two overcovered vertices, contradicting
\(h(u)\le 1\). Thus \(w\in D\).
\end{proof}

\begin{lemma}\label{lem:even-extra-credit}
Let \(u\in D\cap I_e\) satisfy \(h(u)\le 1\). If \(v\) is a vertex at distance
\(2\) from \(u\), then \(h(v)\ge 4\).
\end{lemma}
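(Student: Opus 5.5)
The plan is to copy the argument of Lemma~\ref{lem:odd-extra-credit}, with distance-$2$ vertices in place of grid-neighbours. Since $u\in D\cap I_e$ and $h(u)\le 1$, Lemma~\ref{lem:even-holes-selected} puts all eight vertices at distance $2$ from $u$ in $D$. For the given $v$, I will choose two other selected vertices $w_1,w_2$ at distance $2$ from $u$ with $d(v,w_i)=2$. I will then exhibit two explicit points of $S_k(v)\cap S_k(w_i)$ for each $i$, and check that these four points are distinct and lie in the grid. Each such point is covered by both $v$ and $w_i$, so it is overcovered and lies in $T_k(v)$, which gives $h(v)\ge 4$.

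By translation and the symmetries of the grid, put $u=(0,0)$. Up to symmetry there are two cases.

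\emph{Case $v=(2,0)$.} Take $w_1=(1,1)$ and $w_2=(1,-1)$. Both are at distance $2$ from $u$ and at distance $2$ from $v$, so both are in $D$.
\begin{itemize}
\item For $v$ and $w_1$, the common points are $(2,k)$ and $(1,1-k)$. For instance, $d((2,k),(1,1))=1+(k-1)=k$ and $d((1,1-k),(2,0))=1+(k-1)=k$.
\item For $v$ and $w_2$, by reflection in the horizontal axis, the common points are $(2,-k)$ and $(1,k-1)$.
\end{itemize}

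\emph{Case $v=(1,1)$.} Take $w_1=(2,0)$ and $w_2=(0,2)$.
\begin{itemize}
\item For $v$ and $w_1$, the common points are $(2,k)$ and $(1,1-k)$; these are the same two points as in the first case, with the roles of $v$ and $w_1$ exchanged.
\item For $v$ and $w_2$, by the coordinate swap, the common points are $(k,2)$ and $(1-k,1)$.
\end{itemize}

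In each case I will verify directly that every listed point is at distance exactly $k$ from both relevant centres, as in the sample computations above. Alternatively, these points can be read off from the formulas in the proof of Lemma~\ref{lem:sphere-intersection}.

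Distinctness uses $k\ge 4$. In the first case the second coordinates are $k,\,1-k,\,-k,\,k-1$, which are pairwise different. In the second case, $(2,k)$ and $(k,2)$ differ because $k\neq 2$. Also $(1,1-k)$ and $(1-k,1)$ differ from each other and from the other two points, since $1-k<0$ while the other coordinates involved are positive.

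Containment in $G_n$: every listed point lies within distance $k+2$ of $u$, because it is at distance $k$ from $v$ and $d(u,v)=2$. The buffer defining $I_e$ guarantees that all vertices within distance $k+2$ of $u$ lie inside the grid. The main care point is this distinctness and containment bookkeeping; the distance checks themselves are routine. Combining the two cases gives at least four overcovered vertices in $T_k(v)$, so $h(v)\ge 4$.
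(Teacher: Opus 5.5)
Your proof is correct and is essentially the paper's argument: reduce to $v=(2,0)$ and $v=(1,1)$, use Lemma~\ref{lem:even-holes-selected} to put two further distance-$2$ vertices of $u$ into $D$, exhibit two common sphere points of $v$ with each of them, and check distinctness and containment using the buffer of $I_e$. The only difference is your choice of auxiliary centres: the paper uses $(0,\pm2)$ for $v=(2,0)$ and $(1,-1),(-1,1)$ for $v=(1,1)$, which does not affect the argument.
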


\begin{proof}
By Lemma~\ref{lem:even-holes-selected}, every vertex at distance \(2\) from
\(u\) lies in \(D\). By symmetry, it is enough to consider two cases.

First let \(u=(0,0)\) and \(v=(2,0)\). Then \((0,2)\) and \((0,-2)\) are also
selected. The exact \(k\)-spheres around \((2,0)\) and \((0,2)\) share
\((2,k)\) and \((0,2-k)\). The exact \(k\)-spheres around \((2,0)\) and
\((0,-2)\) share \((2,-k)\) and \((0,k-2)\). These four vertices are distinct,
overcovered, and lie in \(T_k(v)\).

Second let \(u=(0,0)\) and \(v=(1,1)\). Then \((1,-1)\) and \((-1,1)\) are also
selected. The exact \(k\)-spheres around \((1,1)\) and \((1,-1)\) share
\((k,0)\) and \((2-k,0)\). The exact \(k\)-spheres around \((1,1)\) and
\((-1,1)\) share \((0,k)\) and \((0,2-k)\). Again, these four vertices are
distinct, overcovered, and lie in \(T_k(v)\). All the displayed vertices lie
inside \(G_n\) because \(u\in I_e\). Hence \(h(v)\ge 4\) in all cases.
\end{proof}

\begin{lemma}\label{lem:even-H}
If \(k\ge 4\) is even, then \(H\ge 2|D\cap I_e|\).
\end{lemma}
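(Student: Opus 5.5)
The plan is to mirror the credit redistribution in the proof of Lemma~\ref{lem:odd-H}, replacing the four ordinary neighbours by the eight vertices at distance exactly \(2\). Each selected vertex \(v\in D\) starts with credit \(h(v)\). Since the total credit is \(H\) and is never created or destroyed, it suffices to transfer credit so that every \(u\in D\cap I_e\) ends with at least \(2\) units while no vertex is left with negative credit.

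For \(u\in D\cap I_e\) with \(h(u)\ge 2\), nothing is done. If \(h(u)\le 1\), Lemma~\ref{lem:even-holes-selected} shows that all eight vertices at distance \(2\) from \(u\) lie in \(D\). These are \((\pm2,0)\), \((0,\pm2)\) and \((\pm1,\pm1)\) relative to \(u\), and all of them lie in the grid because \(u\in I_e\). The deficit of \(u\) is at most \(2\), so \(u\) asks each of these eight vertices for \(1/4\) unit of credit.

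Next we check that donors can afford this. A fixed vertex \(v\) has exactly eight vertices at distance \(2\) from it, so it is asked at most \(8\cdot(1/4)=2\) units in total. Moreover, \(v\) is asked only if it lies at distance \(2\) from some \(u\in D\cap I_e\) with \(h(u)\le 1\), and then Lemma~\ref{lem:even-extra-credit} gives \(h(v)\ge 4\). Hence after giving away at most \(2\) units, \(v\) still holds at least \(2\). If \(v\in D\cap I_e\), this meets its own quota. If \(v\notin D\cap I_e\), it has no quota, and its credit stays nonnegative.

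Vertices that are asked have \(h\ge 4\) and are never recipients, so there is no circularity. Summing over all vertices gives \(H\ge 2|D\cap I_e|\).

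The only delicate point is the counting balance. Using eight donors instead of four halves the per-donor request, which exactly compensates for having twice as many potential requesters per donor. So the bound of \(2\) units per donor and the threshold \(h\ge 4\) from Lemma~\ref{lem:even-extra-credit} fit exactly as in the odd case.
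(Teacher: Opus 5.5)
Your proposal is correct and follows the paper's proof essentially verbatim: the same $1/4$-per-donor redistribution over the eight distance-$2$ vertices supplied by Lemma~\ref{lem:even-holes-selected}, the same bound of at most eight requesters per donor, and the same appeal to Lemma~\ref{lem:even-extra-credit} to guarantee $h\ge 4$ for every donor. Your explicit remark that donors ($h\ge 4$) can never be recipients ($h\le 1$) is a small clarification that the paper leaves implicit.
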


\begin{proof}
The proof is the same credit redistribution argument, but using distance-\(2\)
neighbors instead of ordinary neighbors. If \(u\in D\cap I_e\) has \(h(u)\ge 2\),
there is nothing to do. If \(h(u)\le 1\), then Lemma~\ref{lem:even-holes-selected}
shows that all eight vertices at distance \(2\) from \(u\) lie in \(D\). The
deficit of \(u\) is at most \(2\), so \(u\) takes this deficit equally from
those eight selected vertices, asking each for at most \(1/4\) units of credit.

A fixed selected vertex can be at distance \(2\) from at most eight vertices
\(u\), so it is asked to give away at most \(8\cdot(1/4)=2\) units of credit.
Every vertex that is asked to give credit has \(h\ge 4\), by Lemma~\ref{lem:even-extra-credit}.
Therefore, it can give away at most \(2\) units and still keep \(2\) for itself,
if it is also in \(D\cap I_e\). Thus, the redistribution is valid, and
\(H\ge 2|D\cap I_e|\).
\end{proof}

\begin{theorem}\label{thm:even-k-lower}
Let \(k\ge 4\) be even. If \(n\ge 2k+5\), then
\[
 D^{(k)}_{\mathrm{opt}}(G_n)\ge \frac{(n-2k-4)^2}{4k}.
\]
In particular, for fixed even \(k\ge 4\),
\(D^{(k)}_{\mathrm{opt}}(G_n)\ge n^2/(4k)-O_k(n)\).
\end{theorem}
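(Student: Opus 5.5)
The proof will mirror the one for Theorem~\ref{thm:odd-k-lower}, with $I_e$ in place of $I_o$ and Lemma~\ref{lem:even-H} in place of Lemma~\ref{lem:odd-H}.

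Fix an arbitrary exact-distance $k$-dominating set $D$ of $G_n$, and assume $n\ge 2k+5$ so that $I_e$ is nonempty with $|I_e|=(n-2k-4)^2$. Lemma~\ref{lem:even-H} gives $H\ge 2|D\cap I_e|$. Combining this with Observation~\ref{obs:E-H} yields
\[
E\ge \frac{H}{2}\ge |D\cap I_e|.
\]
The set $D$ has at most $n^2-|I_e|$ vertices outside $I_e$, so $|D\cap I_e|\ge |D|-n^2+|I_e|$. Hence $E\ge |D|-n^2+|I_e|$.

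Next I would substitute this bound into inequality (1), $n^2+E\le (4k+1)|D|$. This gives $|D|+|I_e|\le (4k+1)|D|$, that is, $|I_e|\le 4k|D|$. Therefore $|D|\ge (n-2k-4)^2/(4k)$. Since $D$ was arbitrary, the same bound holds for $D^{(k)}_{\mathrm{opt}}(G_n)$.

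For the asymptotic statement, expand $(n-2k-4)^2/(4k)=n^2/(4k)-O_k(n)$, noting that $k$ is fixed. Small values of $n$ can be absorbed into the constant, for example via Observation~\ref{obs:trivial lower bound}.

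No step here is genuinely difficult; all the substance is in Lemma~\ref{lem:even-H}, which I take as given. The only point to check is that the inequality $|D\cap I_e|\ge |D|-(n^2-|I_e|)$ is used in the correct direction. If this makes the right-hand side negative, the bound $E\ge 0$ already suffices and the conclusion still holds.
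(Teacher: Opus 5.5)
Your proposal is correct and is essentially the paper's own proof: Lemma~\ref{lem:even-H} and Observation~\ref{obs:E-H} give $E\ge |D\cap I_e|\ge |D|-n^2+|I_e|$, and substituting this into (1) yields $|I_e|\le 4k|D|$. Your closing case split on the sign is unnecessary, since $|D\cap I_e|\ge |D|-(n^2-|I_e|)$ always holds (because $|D\setminus I_e|\le n^2-|I_e|$), so the chain of inequalities goes through as written.
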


\begin{proof}
By Lemma~\ref{lem:even-H} and Observation~\ref{obs:E-H}, we have
\(E\ge |D\cap I_e|\). Since \(|I_e|=(n-2k-4)^2\), we get
\(E\ge |D|-n^2+|I_e|\). Substituting into (1) yields
\[
 n^2+|D|-n^2+|I_e|\le (4k+1)|D|.
\]
Thus \(|I_e|\le 4k|D|\), and hence
\(|D|\ge |I_e|/(4k)=(n-2k-4)^2/(4k)\). Since \(D\) was arbitrary, the theorem
follows.
\end{proof}

Combining Theorems~\ref{thm:odd-k-lower} and~\ref{thm:even-k-lower}, we obtain
the compact asymptotic statement
\[
 D^{(k)}_{\mathrm{opt}}(G_n)\ge \frac{1}{4k}n^2-O_k(n)
\]
for every fixed \(k\ge 3\).

\section{Upper bound for exact-distance \(k\)-domination on grid} \label{sec:upper bound}

Throughout this section, \(k\geq 2\) is fixed. We work first in the infinite
square lattice \(\mathbb{Z}^2\), endowed with the Manhattan distance
\[
d\bigl((x,y),(x',y')\bigr)=|x-x'|+|y-y'|.
\]
For \(u\in\mathbb{Z}^2\), write
\[
T_k(u)=\{u\}\cup\{z\in\mathbb{Z}^2:d(u,z)=k\}
\]
for its closed exact-distance \(k\)-coverage set.

\textbf{Idea overview:} First, we choose a vertical bar of selected
vertices and determine its coverage region exactly. We then label that region by a complete set of residues. Next we take copies of the vertical bar periodically (using modular arithmetic) and show that it dominates the infinite lattice. Refer to Figure~\ref{fig:exact-k-vertical-bar-construction}. Finally, we
add a width-\(k\) boundary strip and average over all residue shifts to obtain
a finite-grid bound.

\subsection{The vertical bar and its coverage region}

Set $L=2k-2$ and define the vertical bar
\[
B_k=\{(0,j):0\leq j\leq L-1\}.
\]
Thus, \(B_k\) contains exactly \(L=2k-2\) selected vertices. Let
\[
C_k=\bigcup_{q\in B_k}T_k(q)
\]
be the set covered by the whole bar. For an integer \(y\), define
\[
C_k(y)=\{x\in\mathbb{Z}:(x,y)\in C_k\}.
\]

The following proposition identifies the point set dominated by the vertical bar $B_k$. The proof is somewhat trivial, but we still give it for completeness.

\begin{proposition}
\label{prop:exact-k-upper-row-structure}
The nonempty rows of \(C_k\) are precisely the rows
\[
-k\leq y\leq L+k-1.
\]
More explicitly,

\begin{equation}
C_k(-k+t)=\{-t,-t+1,\ldots,t\}
\qquad (0\leq t\leq k-1),
\label{eq:exact-k-upper-lower-row}
\end{equation}
\begin{equation}
C_k(y)=\{-k,-k+1,\ldots,k\}
\qquad (0\leq y\leq L-1),
\label{eq:exact-k-upper-middle-row}
\end{equation}
and
\begin{equation}
C_k(L+t)
=
\{-(k-1-t),\ldots,k-1-t\}
\qquad (0\leq t\leq k-1).
\label{eq:exact-k-upper-upper-row}
\end{equation}

Consequently, the row widths are
\[
1,3,5,\ldots,2k-1,
\underbrace{2k+1,\ldots,2k+1}_{2k-2\text{ rows}},
2k-1,\ldots,5,3,1.
\]
\end{proposition}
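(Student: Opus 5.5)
We compute $C_k(y)$ row by row directly from the definition: a point $(x,y)$ lies in $T_k((0,j))$ iff either $(x,y)=(0,j)$ or $|x|+|y-j|=k$. So for a fixed row $y$,
\[
C_k(y)=\{0: 0\le y\le L-1\}\ \cup \bigcup_{j=0}^{L-1}\{x: |x|=k-|y-j|\},
\]
where the second set is empty when $|y-j|>k$. In words, $C_k(y)$ consists of the values $\pm(k-m)$ as $m=|y-j|$ ranges over $D(y)=\{|y-j|:0\le j\le L-1\}\cap[0,k]$, together with $0$ if the row meets the bar. The plan is to determine $D(y)$ for each range of $y$ and read off the set.

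\textbf{Middle rows} $0\le y\le L-1$. Here $D(y)=\{0,1,\dots,\max(y,L-1-y)\}\cap[0,k]$. Since $L-1=2k-3$, we have $\max(y,2k-3-y)\ge k-1$ (because $y+(2k-3-y)=2k-3$). So $D(y)\supseteq\{0,\dots,k-1\}$, and the values $\pm(k-m)$ cover $\pm1,\dots,\pm k$. Adding $0$ from the bar itself gives $\{-k,\dots,k\}$. This is \eqref{eq:exact-k-upper-middle-row}, and nothing outside $[-k,k]$ can appear.

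\textbf{Lower rows} $y=-k+t$ with $0\le t\le k-1$. Here $y<0$, so $|y-j|=j-y=j+k-t$, which ranges over $k-t,\dots$. Intersecting with $[0,k]$ gives $D(y)=\{k-t,\dots,k\}$, since $L-1\ge t$ because $2k-3\ge k-1$ for $k\ge2$. The resulting $x$-values are $\pm(k-m)$ for these $m$, i.e.\ $\pm0,\dots,\pm t$, which is $\{-t,\dots,t\}$. The row does not meet the bar, so \eqref{eq:exact-k-upper-lower-row} follows.

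\textbf{Upper rows} $y=L+t$. By the symmetry $j\mapsto L-1-j$, $y\mapsto L-1-y$, the bar is mapped to itself, and $L+t$ corresponds to $-1-t=-k+(k-1-t)$. Hence \eqref{eq:exact-k-upper-upper-row} follows from the lower-row case.

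\textbf{Empty rows.} For $y<-k$ or $y>L+k-1$, every $|y-j|>k$, so these rows are empty. The row widths $2t+1$ and $2k+1$ then follow by counting.

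The only delicate point is the middle-row claim that all of $0,\dots,k-1$ are attained as $|y-j|$. This is where $L=2k-2$ is used, via the inequality $\max(y,L-1-y)\ge k-1$. Including the bar point itself supplies $x=0$, which otherwise would only come from $m=k$. For $k=2$ the same computation goes through with $L=2$.
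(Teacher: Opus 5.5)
Your proof is correct and follows essentially the same row-by-row computation as the paper, including the key middle-row observation that $\max(y,L-1-y)\ge k-1$ because $L-1=2k-3$. The only differences are cosmetic. You obtain the upper-tail rows from the lower-tail rows via the reflection $y\mapsto L-1-y$, which fixes the bar, whereas the paper recomputes them directly with the substitution $j=L-1-h$; you also make explicit the check $t\le L-1$ that the paper leaves implicit.
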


\begin{proof}
Consider first a lower-tail row \(y=-k+t\), where
\(0\leq t\leq k-1\). For a bar vertex \((0,j)\), $|y-j|=k-t+j$.
A point \((x,y)\) lies at distance exactly \(k\) from \((0,j)\) precisely when $|x|+k-t+j=k$, or equivalently $|x|=t-j$.

This is possible exactly for \(0\leq j\leq t\). As \(j\) runs from \(0\)
to \(t\), the possible values of \(|x|\) are
\(t,t-1,\ldots,0\). Hence the whole interval
\(\{-t,-t+1,\ldots,t\}\) occurs, proving~\eqref{eq:exact-k-upper-lower-row}.

Now, let \(0\leq y\leq L-1\). Every covered point on this row satisfies
\(|x|\leq k\). Conversely, \((0,y)\) belongs to the selected bar. Let
\(1\leq |x|\leq k\), and put $d_0=k-|x|$.
Then \(0\leq d_0\leq k-1\). Since $L-1=2k-3$, at least one of the two numbers \(y\) and \(L-1-y\) is at least
\(k-1\). Therefore, there is a bar index \(j\) satisfying $|j-y|=d_0$.

For this \(j\), $d\bigl((x,y),(0,j)\bigr)
=|x|+d_0
=k$.
Thus, every \(x\in\{-k,-k+1,\ldots,k\}\) occurs, proving~\eqref{eq:exact-k-upper-middle-row}.

Finally, consider an upper-tail row \(y=L+t\), where
\(0\leq t\leq k-1\). Write a bar index in the form $j=L-1-h,
\qquad
0\leq h\leq L-1.
$
Then $|y-j|=t+1+h$.
The exact-distance equation becomes $|x|+t+1+h=k$, so $|x|=k-1-t-h$.
As \(h\) ranges from \(0\) to \(k-1-t\), the possible values of
\(|x|\) are \(k-1-t,k-2-t,\ldots,0\). This gives exactly the interval in~\eqref{eq:exact-k-upper-upper-row}. Rows outside the displayed range are farther than \(k\) from
every bar vertex and hence are empty.

\end{proof}

\begin{figure}[t]
\centering
\begin{tikzpicture}[x=0.48cm,y=0.48cm]
  \foreach \y/\a/\b/\w in {
    -4/0/0/1,
    -3/-1/1/3,
    -2/-2/2/5,
    -1/-3/3/7,
     0/-4/4/9,
     1/-4/4/9,
     2/-4/4/9,
     3/-4/4/9,
     4/-4/4/9,
     5/-4/4/9,
     6/-3/3/7,
     7/-2/2/5,
     8/-1/1/3,
     9/0/0/1}
  {
    \foreach \x in {\a,...,\b}
      \fill[black!32] (\x,\y) circle (1.35pt);
    \node[anchor=east,font=\scriptsize] at (-5.0,\y) {\(\w\)};
  }
  \draw[black!35] (-4.55,-4.55) rectangle (4.55,9.55);
  \foreach \y in {0,...,5}
    \filldraw[fill=exactknavy,draw=white,line width=0.35pt]
      (0,\y) circle (3.15pt);
  \draw[exactknavy,line width=1pt,rounded corners=2pt]
      (-0.43,-0.43) rectangle (0.43,5.43);
  \node[font=\scriptsize,rotate=90] at (5.15,-2.0) {lower tail};
  \node[font=\scriptsize,rotate=90] at (5.15,2.5) {middle rows};
  \node[font=\scriptsize,rotate=90] at (5.15,7.5) {upper tail};
  \node[font=\scriptsize,anchor=east] at (-5.0,10.25) {row width};
\end{tikzpicture}
\caption{The coverage region \(C_4\). The six navy vertices form the
bar \(B_4\). The gray points are all vertices covered by the bar. The row
widths are \(1,3,5,7\), then six rows of width \(9\), and finally
\(7,5,3,1\).}
\label{fig:exact-k-upper-model-bar}
\end{figure}

In the corollary below, we note the size of the covered region.
\begin{corollary}
\label{cor:exact-k-upper-model-size}
The number of vertices in \(C_k\) is
\[
|C_k|=2(3k^2-k-1).
\]
\end{corollary}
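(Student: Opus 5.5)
The plan is to sum the row widths given in Proposition~\ref{prop:exact-k-upper-row-structure}. The rows of $C_k$ are pairwise disjoint, and by that proposition they are exactly the rows $-k\leq y\leq L+k-1$. Hence $|C_k|=\sum_y |C_k(y)|$, where each $|C_k(y)|$ is the length of the displayed integer interval. I will split this sum into three groups: the lower tail, the middle rows, and the upper tail.

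For the lower tail, equation~\eqref{eq:exact-k-upper-lower-row} gives widths $2t+1$ for $0\leq t\leq k-1$. These sum to $\sum_{t=0}^{k-1}(2t+1)=k^2$. For the upper tail, equation~\eqref{eq:exact-k-upper-upper-row} gives widths $2(k-1-t)+1$ for $0\leq t\leq k-1$. Substituting $s=k-1-t$ turns this into the same sum of the first $k$ odd numbers, which is again $k^2$. For the middle rows, equation~\eqref{eq:exact-k-upper-middle-row} gives $L=2k-2$ rows, each of width $2k+1$. Their contribution is therefore $(2k-2)(2k+1)=4k^2-2k-2$.

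Adding the three contributions gives
\[
|C_k| = k^2+k^2+4k^2-2k-2 = 6k^2-2k-2 = 2(3k^2-k-1),
\]
which is the claim. There is no real obstacle. The only points to check are that the three row ranges $-k\leq y\leq -1$, $0\leq y\leq L-1$, and $L\leq y\leq L+k-1$ are disjoint and cover every nonempty row, which the proposition already provides, and that the count of odd numbers in each tail is exactly $k$. As a sanity check, for $k=4$ the formula gives $2(48-4-1)=86$. Summing the widths in Figure~\ref{fig:exact-k-upper-model-bar} directly gives $16+54+16=86$, which agrees.
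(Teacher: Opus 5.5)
Your proposal is correct and takes the same approach as the paper. Both sum the row widths from Proposition~\ref{prop:exact-k-upper-row-structure}: the middle rows contribute $(2k-2)(2k+1)$ and each tail contributes $k^2$, for a total of $6k^2-2k-2=2(3k^2-k-1)$.
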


\begin{proof}
The \(L=2k-2\) middle rows each have width \(2k+1\). Each tail has row
widths \(1,3,\ldots,2k-1\), whose sum is \(k^2\). Therefore
\[
\begin{aligned}
|C_k|
&=(2k-2)(2k+1)+2(1+3+\cdots+(2k-1))\\
&=2(3k^2-k-1).
\end{aligned}
\]
\end{proof}

\subsection{Reading the coverage region using residues}

Set $s=2k+1$ and $M=2(3k^2-k-1)=|C_k|$.
The number \(s\) is the width of every middle row, while \(M\) is the total
number of vertices in the model region. Define 
\[
\phi_k(x,y)=x+sy\pmod M.
\]
Moving one lattice step to the right increases the label ($\phi$) by \(1\), and moving
one row upward increases it by exactly the middle-row width \(s\). 

In this subsection, we show that the linear labelling
\(\phi_k(x,y)=x+(2k+1)y\pmod M\) reads the entire coverage region \(C_k\)
as one complete system of residues modulo \(M\). The middle rows contribute
one consecutive cyclic interval, while the lower and upper tails interlace
without gaps or overlaps to fill exactly the remaining residues. This
complete-residue property which we prove in the lemma below is the key ingredient that allows the local
vertical-bar pattern to be repeated periodically over the whole lattice.

\begin{lemma}
\label{lem:exact-k-upper-complete-residues}
The restriction of \(\phi_k\) to \(C_k\) is a bijection
\[
\phi_k:C_k\longrightarrow\mathbb{Z}/M\mathbb{Z}.
\]
Equivalently, the \(M\) vertices of \(C_k\) receive all \(M\) residue
classes exactly once.
\end{lemma}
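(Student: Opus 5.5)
The plan is to compute the labels row by row and show that they fill the residues with no gaps or repeats. Since $|C_k|=M$ by Corollary~\ref{cor:exact-k-upper-model-size}, it suffices to show that the integer values $x+sy$ (taking the lower-tail rows shifted by $M$) form a set of $M$ consecutive integers. Such a set reduces to a complete residue system modulo $M$. Throughout I use the row description of Proposition~\ref{prop:exact-k-upper-row-structure}, together with the two identities
\[
sL=(2k+1)(2k-2)=4k^2-2k-2,\qquad M-sk=4k^2-3k-2=sL-k .
\]

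\textbf{Middle rows.} Row $y$ with $0\le y\le L-1$ contains $x\in\{-k,\dots,k\}$, so its labels form the interval $[sy-k,\,sy+k]$. This interval has length exactly $s$, and it ends at $sy+k$, one less than where row $y+1$ starts. Hence the middle rows together give the single consecutive block $[-k,\,sL-k-1]$ of $sL$ integers.

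\textbf{Tails.} The remaining $M-sL=2k^2$ integers form the block $[sL-k,\,sL-k+2k^2-1]$, and I will show the tails tile it.
\begin{itemize}
\item The upper tail row $L+t$ (with $0\le t\le k-1$) contributes the labels
\[
U_t=[\,sL+(s+1)t-k+1,\; sL+(s-1)t+k-1\,].
\]
\item The lower tail row $-k+t$ has true labels $-sk+st+x$ with $|x|\le t$. Adding $M$ and using $M-sk=sL-k$, these become
\[
\Lambda_t=[\,sL-k+(s-1)t,\; sL-k+(s+1)t\,].
\]
\end{itemize}
With $s\pm1=2k+1\pm1$, the blocks interlace in the order $\Lambda_0,U_0,\Lambda_1,U_1,\dots,\Lambda_{k-1},U_{k-1}$:
\begin{itemize}
\item $\Lambda_0$ begins at $sL-k$, immediately after the middle block ends.
\item $\Lambda_t$ ends at $sL-k+(2k+2)t$, and $U_t$ begins at $sL-k+1+(2k+2)t$.
\item $U_t$ ends at $sL+k-1+2kt$, and $\Lambda_{t+1}$ begins at $sL+k+2kt$.
\item $U_{k-1}$ ends at $sL+k-1+2k(k-1)=sL-k+2k^2-1$, which is exactly the end of the gap.
\end{itemize}
So all $M$ labels form one run of consecutive integers, each value attained once, and reduction modulo $M$ is a bijection onto $\mathbb{Z}/M\mathbb{Z}$.

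The main obstacle is the tail interlacing. The key step is normalising the lower tail by adding $M$, via the identity $M-sk=sL-k$. After that, the adjacency checks between $\Lambda_t$, $U_t$ and $\Lambda_{t+1}$ are routine arithmetic, which I would verify carefully. I would also spot-check against Figure~\ref{fig:exact-k-upper-model-bar} with $k=4$, where $M=86$, $s=9$ and $L=6$.
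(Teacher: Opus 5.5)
Your proposal is correct and follows the paper's proof essentially step for step: your $\Lambda_t$ and $U_t$ are exactly the paper's $I_t^-$ and $I_t^+$ (via $sL-k=A+1$), and they are interlaced in the same order. The only cosmetic difference is that you keep the full label set as the single integer block $[-k,\,M-k-1]$, whereas the paper wraps the middle rows modulo $M$ and then uses $|C_k|=M$ to rule out repeated residues.
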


\begin{proof}
Put $A=4k^2-3k-3$.
We first keep the labels as ordinary integers and reduce modulo \(M\) only
at the end.

For a middle row \(0\leq y\leq L-1=2k-3\), Proposition~\ref{prop:exact-k-upper-row-structure}
gives \(-k\leq x\leq k\). Hence its labels form the interval $[sy-k,\,sy+k]$.
The next middle row starts at $s(y+1)-k=sy+k+1$,
which is exactly one more than the preceding endpoint. Thus all middle rows concatenate into $[-k,A]$.
Modulo \(M\), they occupy
\begin{equation}
\{M-k,M-k+1,\ldots,M-1\}
\cup
\{0,1,\ldots,A\}.
\label{eq:exact-k-upper-middle-residues}
\end{equation}
The missing residues are therefore the single interval
\begin{equation}
A+1,A+2,\ldots,M-k-1.
\label{eq:exact-k-upper-missing-residues}
\end{equation}

Now write a lower-tail row as \(y=-k+t\), with
\(0\leq t\leq k-1\). Its \(x\)-interval is \([-t,t]\), so its integer
labels form $[-sk+2kt,\,-sk+(2k+2)t]$.
Since $M-sk=A+1$,
adding \(M\) turns this into the congruent interval
\begin{equation}
I_t^-=[A+1+2kt,\,A+1+(2k+2)t].
\label{eq:exact-k-upper-lower-tail-interval}
\end{equation}

Similarly, write an upper-tail row as \(y=L+t\), with
\(0\leq t\leq k-1\). Its \(x\)-interval is
\([-(k-1-t),k-1-t]\), and a direct simplification gives the label interval
\begin{equation}
I_t^+=[A+2+(2k+2)t,\,A+2k(t+1)].
\label{eq:exact-k-upper-upper-tail-interval}
\end{equation}

The intervals in~\eqref{eq:exact-k-upper-lower-tail-interval} and
\eqref{eq:exact-k-upper-upper-tail-interval} fit together without gaps. Indeed, $\min I_t^+=\max I_t^-+1$,
and, for \(0\leq t<k-1\), $\min I_{t+1}^-=\max I_t^++1$.
Hence the tail intervals occur consecutively in the order
\[
I_0^-,I_0^+,I_1^-,I_1^+,\ldots,I_{k-1}^-,I_{k-1}^+.
\]
The first interval starts at \(A+1\), and the last one ends at $A+2k^2=M-k-1$.
Thus the tails cover exactly the missing interval
\eqref{eq:exact-k-upper-missing-residues}, once each. Together with
\eqref{eq:exact-k-upper-middle-residues}, this gives every residue modulo \(M\). Since
\(|C_k|=M\), no residue can occur twice.
\end{proof}

\begin{remark}
The complete-residue property is a special feature of the geometry of
\(C_k\), rather than an automatic consequence of its cardinality. In
general, a finite lattice region of size \(M\) need not admit any linear
labeling \(ax+by\pmod M\) that is injective on the region. Here the constant
middle-row width \(2k+1\), together with the precisely matched triangular
tails, makes the residue intervals concatenate without gaps or overlaps.
\end{remark}

\begin{figure}[t]
\centering
\begin{tikzpicture}[x=1.18cm,y=0.78cm]
  \node[anchor=east,font=\small] at (-0.15,2.15) {middle rows:};
  \draw[fill=exactknavy!16,draw=exactknavy]
      (0,1.75) rectangle (2.3,2.55);
  \node[font=\small] at (1.15,2.15) {\([82,85]\cup[0,49]\)};
  \node[anchor=east,font=\small] at (-0.15,0.75) {tails:};
  \foreach \i/\lab/\rng/\shade in {
    0/{I_0^-}/{50}/18,
    1/{I_0^+}/{51\!:\!57}/36,
    2/{I_1^-}/{58\!:\!60}/18,
    3/{I_1^+}/{61\!:\!65}/36,
    4/{I_2^-}/{66\!:\!70}/18,
    5/{I_2^+}/{71\!:\!73}/36,
    6/{I_3^-}/{74\!:\!80}/18,
    7/{I_3^+}/{81}/36}
  {
    \draw[fill=exactknavy!\shade,draw=exactknavy]
      (1.24*\i,0.35) rectangle (1.24*\i+1.12,1.15);
    \node[font=\scriptsize] at (1.24*\i+0.56,0.88) {\(\lab\)};
    \node[font=\scriptsize] at (1.24*\i+0.56,0.58) {\(\rng\)};
  }
  \node[font=\small,anchor=west] at (2.55,2.15)
      {all remaining residues are \(50,51,\ldots,81\)};
  \node[font=\scriptsize,anchor=west] at (0,-0.05)
      {Example \(k=4\): the lower and upper tail intervals alternate and meet end-to-end.};
\end{tikzpicture}
\caption{The residue intervals for \(k=4\), where \(M=86\) and
\(A=49\). The middle rows cover \([82,85]\cup[0,49]\); the alternating
tail intervals cover the complementary interval \([50,81]\) with no gap
and no overlap.}
\label{fig:exact-k-upper-residue-intervals}
\end{figure}

\subsection{Dominating the infinite lattice using periodic placement of the vertical bar}

The labels of the selected vertices of the model bar are
\[
R_k
=
\{0,s,2s,\ldots,(L-1)s\}
\subseteq
\mathbb{Z}/M\mathbb{Z}.
\]
For a shift \(t\in\mathbb{Z}/M\mathbb{Z}\), define the periodic set
\[
D_\infty(t)
=
\{z\in\mathbb{Z}^2:\phi_k(z)\in t+R_k\}.
\]

\begin{proposition}
\label{prop:exact-k-upper-infinite}
For every shift \(t\), the set \(D_\infty(t)\) is an exact-distance
\(k\)-dominating set of \(\mathbb{Z}^2\).
\end{proposition}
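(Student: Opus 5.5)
Fix a shift \(t\) and an arbitrary vertex \(z\in\mathbb{Z}^2\). The plan is to use Lemma~\ref{lem:exact-k-upper-complete-residues} to find a translate of the model bar whose coverage region \(C_k\) contains \(z\), and to check that this translate lies inside \(D_\infty(t)\).

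\textbf{Step 1: choose the translate.} By Lemma~\ref{lem:exact-k-upper-complete-residues}, the residue \(\phi_k(z)-t\) is attained by exactly one vertex \(c\in C_k\). Define the translation vector \(w=z-c\). Since \(\phi_k\) is linear, we get
\[
\phi_k(w)=\phi_k(z)-\phi_k(c)\equiv t \pmod M .
\]

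\textbf{Step 2: the translated bar is selected.} For each bar vertex \(q=(0,j)\in B_k\), we have \(\phi_k(w+q)=\phi_k(w)+js\equiv t+js\), which lies in \(t+R_k\). Hence \(w+B_k\subseteq D_\infty(t)\).

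\textbf{Step 3: \(z\) is dominated.} Since \(c\in C_k\), there is some \(q\in B_k\) with \(c\in T_k(q)\). Translation preserves the Manhattan distance, so \(z=w+c\in T_k(w+q)\). There are two cases:
\begin{itemize}
\item If \(z=w+q\), then \(z\in D_\infty(t)\), and there is nothing to prove.
\item Otherwise \(d(z,w+q)=k\) with \(w+q\in D_\infty(t)\), so \(z\) is exactly \(k\)-dominated.
\end{itemize}
This holds for every \(z\), so \(D_\infty(t)\) is an exact-distance \(k\)-dominating set of \(\mathbb{Z}^2\).

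There is no real obstacle here, because the substantive work is already done in Lemma~\ref{lem:exact-k-upper-complete-residues}. The one point to state carefully is that \(\phi_k\) is a group homomorphism \(\mathbb{Z}^2\to\mathbb{Z}/M\mathbb{Z}\). This is what makes membership in \(D_\infty(t)\) translation-compatible, and it justifies the identity \(\phi_k(w+q)=\phi_k(w)+\phi_k(q)\).

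Only surjectivity of \(\phi_k\) on \(C_k\) is needed for this proposition. Injectivity matters only later, for the density count \(L/M=(k-1)/(3k^2-k-1)\).
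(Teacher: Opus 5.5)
Your proof is correct and essentially matches the paper's: pick the point of \(C_k\) with residue \(\phi_k(z)-t\), translate the model bar so that this point lands on \(z\), and use the additivity of \(\phi_k\) to see that the translated bar lies in \(D_\infty(t)\). Your remark that only surjectivity of \(\phi_k\) on \(C_k\) is needed here is also accurate; the paper mentions uniqueness at this step but does not actually use it.
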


\begin{proof}
Let \(v\in\mathbb{Z}^2\) be arbitrary. By
Lemma~\ref{lem:exact-k-upper-complete-residues}, there is a unique point
\(p\in C_k\) satisfying
\begin{equation}
\phi_k(p)=\phi_k(v)-t
\pmod M.
\label{eq:exact-k-upper-choose-p}
\end{equation}
Because \(p\in C_k\), some bar vertex \(q\in B_k\) satisfies either $p=q$ or $d(p,q)=k$. Set $a=v-p$ and $w=q+a$. Thus the model picture has been translated by the vector \(a\), sending
\(p\) to \(v\) and \(q\) to \(w\). The map \(\phi_k\) is additive under translations. From~\eqref{eq:exact-k-upper-choose-p},
\[
\phi_k(a)
=
\phi_k(v)-\phi_k(p)
=t
\pmod M.
\]
Therefore, $\phi_k(w)
=
\phi_k(q)+\phi_k(a)
=
\phi_k(q)+t
\in
t+R_k.$
Hence \(w\in D_\infty(t)\).

If \(p=q\), then \(w=v\), so \(v\) itself is selected. If \(p\neq q\),
translation preserves distance and therefore $d(v,w)=d(p,q)=k$.
Thus every lattice vertex is either selected or has a selected vertex at
distance exactly \(k\).
\end{proof}

\begin{figure}[t]
\centering
\begin{tikzpicture}[
  x=0.39cm,y=0.39cm,
  >=Latex,
  line cap=round,
  line join=round,
  every node/.style={font=\small}
]

\def\K{4}
\pgfmathtruncatemacro{\BarSize}{2*\K-2}
\pgfmathtruncatemacro{\BarTop}{2*\K-3}
\pgfmathtruncatemacro{\TileTop}{3*\K-3}
\pgfmathtruncatemacro{\ux}{2*\K+1}
\pgfmathtruncatemacro{\vy}{3*\K-2}
\pgfmathtruncatemacro{\wx}{\K+1}
\pgfmathtruncatemacro{\wy}{3*\K-3}
\pgfmathtruncatemacro{\xmin}{-3*\K-3}
\pgfmathtruncatemacro{\xmax}{ 3*\K+3}
\pgfmathtruncatemacro{\ymin}{-3*\K-2}
\pgfmathtruncatemacro{\ymax}{ 4*\K+1}

\draw[gray!22,step=1] (\xmin,\ymin) grid (\xmax,\ymax);

\newcommand{\ExactKBar}[3]{%
  \draw[#3,line width=0.55pt] (#1,#2) -- (#1,{#2+\BarTop});
  \foreach \j in {0,...,\BarTop}{
    \filldraw[fill=#3,draw=white,line width=0.35pt]
      (#1,{#2+\j}) circle[radius=0.18];
  }
}

\ExactKBar{\ux}{-1}{navybar!55}
\ExactKBar{-\ux}{1}{navybar!55}
\ExactKBar{-\K}{\vy}{navybar!55}
\ExactKBar{\K}{-\vy}{navybar!55}
\ExactKBar{\wx}{\wy}{navybar!55}
\ExactKBar{-\wx}{-\wy}{navybar!55}

\filldraw[fill=tilebottom,draw=navybar,line width=0.8pt]
  (0,-\K) -- (\K,0) -- (-\K,0) -- cycle;
\filldraw[fill=tileleft,draw=navybar,line width=0.8pt]
  (-\K,0) -- (0,0) -- (0,\BarTop) -- (-\K,\BarTop) -- cycle;
\filldraw[fill=tileright,draw=navybar,line width=0.8pt]
  (0,0) -- (\K,0) -- (\K,\BarTop) -- (0,\BarTop) -- cycle;
\filldraw[fill=tiletop,draw=navybar,line width=0.8pt]
  (-\K,\BarTop) -- (\K,\BarTop) -- (0,\TileTop) -- cycle;

\draw[navybar,line width=1.25pt]
  (0,-\K) -- (\K,0) -- (\K,\BarTop) --
  (0,\TileTop) -- (-\K,\BarTop) -- (-\K,0) -- cycle;

\ExactKBar{0}{0}{navybar}
\node[navybar,fill=white,inner sep=1.5pt]
  at (0,{\BarTop/2}) {$B_k$};

\draw[<->,line width=0.7pt]
  (-\K+0.25,2.1) -- (-0.35,2.1)
  node[midway,fill=white,inner sep=1pt] {$k$};
\draw[<->,line width=0.7pt]
  (0.35,2.1) -- (\K-0.25,2.1)
  node[midway,fill=white,inner sep=1pt] {$k$};

\draw[<->,line width=0.7pt]
  (0.85,-\K+0.2) -- (0.85,-0.2)
  node[midway,right,fill=white,inner sep=1pt] {$k$};
\draw[<->,line width=0.7pt]
  (0.85,\BarTop+0.2) -- (0.85,\TileTop-0.2)
  node[midway,right,fill=white,inner sep=1pt] {$k$};

\draw[decorate,decoration={brace,amplitude=4pt},line width=0.7pt]
  (-\K-0.55,0) -- (-\K-0.55,\BarTop)
  node[midway,left=5pt,align=right]
  {$2k-2$\\[-1pt]{\scriptsize selected vertices}};

\draw[decorate,decoration={brace,mirror,amplitude=4pt},line width=0.7pt]
  (\K+0.55,-\K) -- (\K+0.55,\TileTop)
  node[midway,right=5pt,align=left]
  {$4k-3$\\[-1pt]{\scriptsize lattice steps}};

\node[fill=white,inner sep=1.5pt]
  at (0,{\TileTop-1.0})
  {$C_k$, \quad $|C_k|=2(3k^2-k-1)$};

\node[navybar!80,fill=white,inner sep=1.5pt,anchor=west]
  (ulabel) at ({\ux+0.5},{-1+\BarTop+1.0})
  {$\mathbf u=(2k+1,-1)$};
\draw[->,navybar!80,line width=0.7pt]
  (ulabel.west) -- (\ux,-1);

\node[navybar!80,fill=white,inner sep=1.5pt,anchor=east]
  (vlabel) at ({-\K-0.7},{\vy+\BarTop+0.8})
  {$\mathbf v=(-k,3k-2)$};
\draw[->,navybar!80,line width=0.7pt]
  (vlabel.east) -- (-\K,\vy);

\end{tikzpicture}
\caption{The periodic vertical-bar construction, illustrated for $k=4$.
The navy points form the model bar $B_k$ of $2k-2$ selected vertices.  Its
closed exact-distance-$k$ coverage region is the shaded hexagonal lattice
region $C_k$.  The remaining bars are periodic copies; the bottom vertices
repeat under the translations $\mathbf u=(2k+1,-1)$ and
$\mathbf v=(-k,3k-2)$.}
\label{fig:exact-k-vertical-bar-construction}
\end{figure}

\subsection{Passing to the finite grid}

For the finite grid \(G_n\), define the width-\(k\) boundary strip
\[
\partial_k G_n
=
\bigl\{(i,j)\in V(G_n):
\min\{i,j,n+1-i,n+1-j\}\leq k\bigr\}.
\]
The remaining interior has side length $r=\max\{n-2k,0\}$.
For a shift \(t\in\mathbb{Z}/M\mathbb{Z}\), set
\begin{equation}
D_t
=
\partial_k G_n
\cup
\bigl\{(i,j)\in V(G_n)\setminus\partial_k G_n:
\phi_k(i,j)\in t+R_k\bigr\}.
\label{eq:exact-k-upper-finite-set}
\end{equation}

\begin{lemma}
\label{lem:exact-k-upper-boundary}
For every \(t\in\mathbb Z/M\mathbb Z\), the set
\[
D_t
=
\partial_kG_n
\cup
\bigl\{
(i,j)\in V(G_n)\setminus\partial_kG_n:
\phi_k(i,j)\in t+R_k
\bigr\}
\]
is an exact-distance \(k\)-dominating set of \(G_n\).
\end{lemma}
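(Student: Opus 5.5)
We check the domination condition vertex by vertex, using Proposition~\ref{prop:exact-k-upper-infinite} for interior vertices. Every vertex of \(\partial_kG_n\) belongs to \(D_t\) and so needs nothing further. It therefore suffices to handle a vertex \(v\in V(G_n)\setminus\partial_kG_n\) with \(v\notin D_t\). Such a \(v=(i,j)\) satisfies \(k+1\le i,j\le n-k\), so every lattice point \(w\) with \(d(v,w)\le k\) has both coordinates in \([1,n]\) and hence lies in \(V(G_n)\).

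Since \(\phi_k(v)\notin t+R_k\), \(v\notin D_\infty(t)\) when \(v\) is regarded as a point of \(\mathbb{Z}^2\). Proposition~\ref{prop:exact-k-upper-infinite} then gives a vertex \(w\in D_\infty(t)\) with \(d(v,w)=k\). The distance in \(G_n\) between two of its vertices equals the Manhattan distance in \(\mathbb{Z}^2\), and by the previous paragraph \(w\in V(G_n)\). It remains to show \(w\in D_t\), and we split according to where \(w\) lies.
\begin{itemize}
\item If \(w\in\partial_kG_n\), then \(w\in D_t\) because the whole strip is selected.
\item If \(w\notin\partial_kG_n\), then \(w\) is an interior vertex with \(\phi_k(w)\in t+R_k\), so \(w\) belongs to the second set in the definition of \(D_t\).
\end{itemize}
In both cases \(w\in D_t\) and \(d(v,w)=k\), so \(v\) is dominated.

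The only point that needs care is that the witness \(w\) produced in \(\mathbb{Z}^2\) actually lies in \(G_n\). This is why the strip has width exactly \(k\): an interior vertex has coordinates between \(k+1\) and \(n-k\), so a step of total length \(k\) cannot leave \([1,n]^2\). Whether \(w\) lands in the strip or in the interior does not matter, since the strip is included in \(D_t\) in its entirety.

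If \(n\le 2k\), the interior is empty and \(D_t=V(G_n)\), so the statement holds trivially.
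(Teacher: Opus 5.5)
Your proof is correct and follows essentially the same route as the paper. Like the paper, you restrict to an interior vertex $v\notin D_t$, observe that $v\notin D_\infty(t)$, and take the distance-$k$ witness $w$ from the infinite-lattice proposition. You then use the width-$k$ strip (coordinates between $k+1$ and $n-k$) to keep $w$ inside $G_n$, and conclude $w\in D_t$ by splitting on whether $w$ lies in the strip or in the interior.
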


\begin{proof}
Let \(v=(i,j)\in V(G_n)\setminus D_t\). Since
\(\partial_kG_n\subseteq D_t\), we have $k+1\leq i,j\leq n-k$.
Moreover, \(v\notin D_\infty(t)\), because \(v\) is an interior vertex and
\(\phi_k(v)\notin t+R_k\). Proposition~\ref{prop:exact-k-upper-infinite}
therefore gives a vertex
\[
w=v+(a,b)\in D_\infty(t)
\]
such that $|a|+|b|=k$.
In particular, \(|a|,|b|\leq k\), and hence
\[
1\leq i+a\leq n,
\; \text{and} \;
1\leq j+b\leq n.
\]
Thus \(w\in V(G_n)\).

If \(w\in\partial_kG_n\), then \(w\in D_t\) by definition. Otherwise,
\(w\notin\partial_kG_n\), and since \(w\in D_\infty(t)\), we have
\(\phi_k(w)\in t+R_k\); hence \(w\in D_t\) again. Finally, $d_{G_n}(v,w)=|a|+|b|=k$. This finishes the proof.
\end{proof}

Now we are ready to prove the upper bound estimate.

\begin{theorem}
\label{thm:exact-k-general-upper}
For every \(k\geq 2\) and every positive integer \(n\),
\[
D_{\mathrm{opt}}^{(k)}(G_n)
\leq
n^2-r^2+
\left\lfloor
\frac{(k-1)r^2}{3k^2-k-1}
\right\rfloor,
\qquad
r=\max\{n-2k,0\}.
\]
Consequently, for every fixed \(k\geq 2\), $D_{\mathrm{opt}}^{(k)}(G_n)
\leq
\frac{k-1}{3k^2-k-1}n^2+O_k(n)$.

\end{theorem}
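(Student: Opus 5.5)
By Lemma~\ref{lem:exact-k-upper-boundary}, every set $D_t$ with $t\in\mathbb{Z}/M\mathbb{Z}$ is an exact-distance $k$-dominating set of $G_n$. Hence $D_{\mathrm{opt}}^{(k)}(G_n)\le \min_t |D_t|$. The plan is to bound this minimum by an averaging argument over the $M$ shifts. The boundary strip contributes exactly $|\partial_k G_n| = n^2-r^2$ vertices to every $D_t$, since the interior $\{k+1,\dots,n-k\}^2$ has $r^2$ vertices. When $n\le 2k$ we have $r=0$ and the bound reads $D_{\mathrm{opt}}^{(k)}(G_n)\le n^2$, which is trivial. So it remains to control the number of interior vertices $(i,j)$ with $\phi_k(i,j)\in t+R_k$.

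For this I would count pairs. Let $N=\sum_{t}\bigl|\{v\text{ interior}:\phi_k(v)\in t+R_k\}\bigr|$. Swapping the order of summation, each interior vertex $v$ is counted once for every $t$ with $\phi_k(v)-t\in R_k$. The elements $0,s,\dots,(L-1)s$ of $R_k$ are distinct modulo $M$, because $(L-1)s\le A<M$, or directly by Lemma~\ref{lem:exact-k-upper-complete-residues}, since the bar vertices are distinct points of $C_k$. Hence $|R_k|=L=2k-2$, and there are exactly $L$ such shifts $t$. This gives $N=L r^2$.

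Averaging, some shift $t$ satisfies
\[
\bigl|\{v\text{ interior}:\phi_k(v)\in t+R_k\}\bigr|\le \frac{Lr^2}{M}=\frac{(2k-2)r^2}{2(3k^2-k-1)}=\frac{(k-1)r^2}{3k^2-k-1}.
\]
Since this count is an integer, it is at most the floor of the right-hand side. Adding the $n^2-r^2$ boundary vertices yields the stated bound.

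For the asymptotic consequence, write $n^2-r^2=(n-r)(n+r)\le 2k\cdot 2n=4kn$, which is $O_k(n)$. Also $r^2\le n^2$, so the floor term is at most $\frac{k-1}{3k^2-k-1}n^2$. The only genuinely delicate point is the exact count $|R_k|=L$, that is, that the shift-average is exactly $L/M$. This is handled by the injectivity already established, so I expect no real obstacle; the result is bookkeeping on top of Lemma~\ref{lem:exact-k-upper-boundary}.
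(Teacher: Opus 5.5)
Your proposal is correct and follows the paper's proof. Both show that every $D_t$ is feasible via Lemma~\ref{lem:exact-k-upper-boundary}, double count to see that each interior vertex is selected for exactly $|R_k|=2k-2$ of the $M$ shifts, average and take the floor by integrality, and note that the boundary strip has $n^2-r^2=O_k(n)$ vertices. Your direct check that $0,s,\dots,(L-1)s$ are distinct modulo $M$ because $(L-1)s=(2k-3)(2k+1)<M$ is a valid alternative to the paper's appeal to the bijection of Lemma~\ref{lem:exact-k-upper-complete-residues}.
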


\begin{proof}
By Lemma~\ref{lem:exact-k-upper-boundary}, each of the \(M\) sets \(D_t\)
is feasible. It remains only to choose a shift with few selected interior
vertices.

Fix an interior vertex \(z\). The condition $\phi_k(z)\in t+R_k$
is equivalent to $t\in\phi_k(z)-R_k$.
The residues in \(R_k\) are distinct, because \(R_k\) is the image of the
bar \(B_k\subseteq C_k\) under the bijection of
Lemma~\ref{lem:exact-k-upper-complete-residues}. Therefore \(z\) is selected
for exactly $|R_k|=L=2k-2$
of the \(M=2(3k^2-k-1)\) shifts.

There are \(r^2\) interior vertices. Summing the number of selected interior
vertices over all \(M\) shifts and then dividing by \(M\), their average is $\frac{2k-2}{M}r^2
=
\frac{k-1}{3k^2-k-1}r^2$.

Hence at least one shift selects at most the floor of this average. The
boundary strip contains exactly \(n^2-r^2\) vertices, so for that shift
\[
|D_t|
\leq
n^2-r^2+
\left\lfloor
\frac{(k-1)r^2}{3k^2-k-1}
\right\rfloor.
\]
Finally, \(r=n-O_k(1)\), which gives the asymptotic estimate.
\end{proof}





\section{Conclusion}

We studied exact-distance \(k\)-domination in the square grid
$G_n$. For every fixed \(k\geq3\), we proved that the
exact-distance \(k\)-domination density $\delta_k$ satisfies $\frac{1}{4k}
\leq
\delta_k
\leq
\frac{k-1}{3k^2-k-1}$.

For \(k=2\), the trivial lower bound~(\ref{obs:trivial lower bound}) and upper bound coincide and give $\delta_2=\frac{1}{9}$.
The lower bound follows from unavoidable overlaps among exact-distance
coverage sets, whereas the upper bound is obtained from an explicit
periodic vertical-bar construction.

The principal open problem is to determine \(\delta_k\) exactly, beginning with $\delta_3$ which we believe is exactly equal to $2/23$.
Further directions include improving the overlap argument, finding periodic
constructions of smaller density, extending the problem to higher-dimensional
lattices, and other planar lattices and other interesting graph classes.

\begin{remark}[Rectangular grids]
\label{rem:rectangular-grids}
Note that the square-grid results extend naturally to the rectangular ($m\times n$) grid $G_{m,n}$. Let $a=\max\{(m-2k),0\}, b=\max\{(n-2k),0\}$, and $\alpha_k=\frac{k-1}{3k^2-k-1}$. Then it is easy to argue that
$\left\lceil \delta_k mn\right\rceil
\leq
D_{\mathrm{opt}}^{(k)}(G_{m,n})
\leq
mn-ab+\left\lfloor \alpha_k ab\right\rfloor$ .

The lower bound follows from a replication argument. Indeed, if an
\(m\times n\) rectangle admitted a solution of density smaller than
\(\delta_k\), then a sufficiently large square whose side length is
divisible by both \(m\) and \(n\) could be tiled by copies of this rectangle.
Repeating the same solution independently in every copy would produce square
grids of density smaller than \(\delta_k\), contradicting the definition of
the exact-distance \(k\)-domination density.

For the upper bound, select every vertex in the width-\(k\) boundary strip
of \(G_{m,n}\), leaving an \(a\times b\) interior, and use the same periodic
residue construction as in the square case on this interior. An unselected
interior vertex is at least \(k\) steps from every side, so its
distance-\(k\) witness supplied by the infinite-lattice construction remains
inside the rectangle; if the witness lies in the boundary strip, then it is
already selected. Averaging over all residue shifts as before selects at most an
\(\alpha_k\)-fraction of the \(ab\) interior vertices, which gives the
displayed upper bound.

Consequently, for every fixed \(k\geq3\),
$\left\lceil\frac{mn}{4k}\right\rceil
\leq
D_{\mathrm{opt}}^{(k)}(G_{m,n})
\leq
\frac{k-1}{3k^2-k-1}\,mn+O_k(m+n)$.

\end{remark}

\section{Declaration of Competing Interest}
The authors declare that they have no known competing financial interests or personal relationships that could have appeared to influence the work reported in this paper.

\section{Acknowledgements}
The author Arpan Sadhukhan gratefully acknowledges support from the Anusandhan National Research Foundation (ANRF) through the ANRF-National Post Doctoral Fellowship (N-PDF) for Mathematical Sciences, File No. PDF/2025/001644.

\section{Data availability}
No data was used for the research described in the article.

\bibliography{references}

@article{henning20172,
  title={On 2-step and hop dominating sets in graphs},
  author={Henning, Michael A. and Rad, Nader Jafari},
  journal={Graphs and Combinatorics},
  volume={33},
  number={4},
  pages={913--927},
  year={2017},
  publisher={Springer}
}

@article{ayyaswamy2018note,
  title={A note on hop domination number of some special families of graphs},
  author={Ayyaswamy, SK. and Natarajan, C. and Sathiamoorphy, G.},
  journal={International Journal of Pure and Applied Mathematics},
  volume={119},
  number={12},
  pages={14165--14171},
  year={2018}
}

@article{ayyaswamy2015bounds,
  title={Bounds on the hop domination number of a tree},
  author={Ayyaswamy, SK. and Krishnakumari, B. and Natarajan, C. and Venkatakrishnan, YB.},
  journal={Proceedings-Mathematical Sciences},
  volume={125},
  number={4},
  pages={449--455},
  year={2015},
  publisher={Springer}
}

@article{fujita2025tight,
  title={Tight upper bounds on the hop domination number of triangle-free graphs},
  author={Fujita, Shinya and Park, Boram},
  journal={arXiv preprint arXiv:2503.04124},
  year={2025}
}

@article{shanmugavelan2021hop,
  title={On hop domination number of some generalized graph structures},
  author={Shanmugavelan, SK. and Natarajan, C.},
  journal={Ural Mathematical Journal},
  volume={7},
  number={2 (13)},
  pages={121--135},
  year={2021},
  publisher={Федеральное государственное бюджетное учреждение науки {\guillemotleft}Институт математики~…}
}

@article{anusha2024further,
  title={Further results on the hop domination number of a graph},
  author={Anusha, D. and Robin, S. Joseph and John, J.},
  journal={Boletim Da Sociedade Paranaense de Matem{\'a}tica},
  volume={42},
  pages={1--12},
  year={2024}
}

@article{packiavathihop,
  title={Hop Domination Number of Caterpillar Graphs},
  author={Balamurugan, S. and Gnana Jothi, R. B. and Getchial Pon Packiavathi, P.},
  journal={Advances in Mathematics: Scientific Journal},
  volume={9},
  number={5},
  pages={2739--2748},
  year={2020},
  month={July},
  issn={1857-8365},
  eissn={1857-8438},
  doi={10.37418/amsj.9.5.38}
}

@article{karthika2025hop,
  title={Hop Domination on subclasses of Perfect graphs},
  author={Karthika, D. and Muthucumaraswamy, R. and Bhyravarapu, Sriram and Kumar, Pritesh},
  journal={Theoretical Computer Science},
  pages={115547},
  year={2025},
  publisher={Elsevier}
}

@article{henning2020algorithm,
  title={Algorithm and hardness results on hop domination in graphs},
  author={Henning, Michael A. and Pal, Saikat and Pradhan, Dinabandhu},
  journal={Information Processing Letters},
  volume={153},
  pages={105872},
  year={2020},
  publisher={Elsevier}
}

@inproceedings{karthika2025polynomial,
  title={Polynomial Time Algorithms for Hop Domination},
  author={Karthika, D. and Muthucumaraswamy, R. and Bhyravarapu, Sriram and Kumar, Pritesh},
  booktitle={Conference on Algorithms and Discrete Applied Mathematics},
  pages={85--96},
  year={2025},
  organization={Springer}
}

@book{HaynesHedetniemiSlater1998,
  author    = {Teresa W. Haynes and Stephen T. Hedetniemi and Peter J. Slater},
  title     = {Fundamentals of Domination in Graphs},
  series    = {Monographs and Textbooks in Pure and Applied Mathematics},
  volume    = {208},
  publisher = {Marcel Dekker},
  address   = {New York},
  year      = {1998},
  isbn      = {978-0-8247-0033-1}
}

@article{GoncalvesPinlouRaoThomasse2011,
  author  = {Daniel Gon{\c{c}}alves and Alexandre Pinlou and
             Micha{\"e}l Rao and St{\'e}phan Thomass{\'e}},
  title   = {The Domination Number of Grids},
  journal = {SIAM Journal on Discrete Mathematics},
  volume  = {25},
  number  = {3},
  pages   = {1443--1453},
  year    = {2011},
  doi     = {10.1137/11082574}
}

@inproceedings{FataSmithSundaram2013,
  author    = {Elaheh Fata and Stephen L. Smith and Shreyas Sundaram},
  title     = {Distributed Dominating Sets on Grids},
  booktitle = {Proceedings of the 2013 American Control Conference},
  pages     = {211--216},
  publisher = {IEEE},
  year      = {2013},
  doi       = {10.1109/ACC.2013.6579839}
}

@article{FarinaGrez2016,
  author  = {Michael Farina and Armando Grez},
  title   = {New Upper Bounds on the Distance Domination Numbers of Grids},
  journal = {Rose-Hulman Undergraduate Mathematics Journal},
  volume  = {17},
  number  = {2},
  note    = {Article 7},
  year    = {2016}
}

@article{BlessingInskoJohnsonMauretour2015,
  author  = {David Blessing and Erik Insko and Katie Johnson and
             Christie Mauretour},
  title   = {On \((t,r)\) Broadcast Domination Numbers of Grids},
  journal = {Discrete Applied Mathematics},
  volume  = {187},
  pages   = {19--40},
  year    = {2015},
  doi     = {10.1016/j.dam.2015.02.005}
}

@article{NatarajanAyyaswamy2015,
  author  = {C. Natarajan and SK.  Ayyaswamy},
  title   = {Hop Domination in Graphs---II},
  journal = {Analele Stiintifice ale Universitatii Ovidius Constanta,
             Seria Matematica},
  volume  = {23},
  number  = {2},
  pages   = {187--199},
  year    = {2015},
  doi     = {10.1515/auom-2015-0036}
}

@article{ChartrandHararyHossainSchultz1995,
  author  = {Gary Chartrand and Frank Harary and Moazzem Hossain and
             Kelly Schultz},
  title   = {Exact 2-Step Domination in Graphs},
  journal = {Mathematica Bohemica},
  volume  = {120},
  number  = {2},
  pages   = {125--134},
  year    = {1995},
  doi     = {10.21136/MB.1995.126228}
}

@article{Hersh1999,
  author  = {Patricia Hersh},
  title   = {On Exact \(n\)-Step Domination},
  journal = {Discrete Mathematics},
  volume  = {205},
  number  = {1--3},
  pages   = {235--239},
  year    = {1999},
  doi     = {10.1016/S0012-365X(99)00024-2}
}

@article{jalalvand2017complexity,
  author    = {Farhadi Jalalvand, M. and Jafari Rad, N.},
  title     = {On the complexity of $k$-step and $k$-hop dominating sets in graphs},
  journal   = {Mathematica Montisnigri},
  volume    = {40},
  pages     = {36--41},
  year      = {2017}
}

\newpage

\end{document}